\numberwithin{equation}{section}
\newtheorem{theorem}{Theorem}[section]
\newtheorem{thm}{Theorem}   
\newtheorem{lemma}[theorem]{Lemma}
\newtheorem{proposition}[theorem]{Proposition}
\newtheorem{corollary}[theorem]{Corollary}
\newtheorem{example}[theorem]{Example}
\newtheorem{definition}[theorem]{Definition}
\newtheorem{remark}[theorem]{Remark}
\newcommand{\ab}{^{ab}}
\newcommand{\g}{\mathfrak{g}}
\newcommand{\A}{\mathcal{A}}
\newcommand{\B}{\mathcal{B}}
\newcommand{\D}{\mathcal{D}}
\newcommand{\F}{\mathcal{F}}
\newcommand{\G}{\mathcal{G}}
\newcommand{\HH}{\mathcal{H}}
\newcommand{\R}{\mathbb{R}}
\newcommand{\Z}{\mathbb{Z}}
\newcommand{\T}{\mathbb{T}}
\renewcommand{\S}{\mathbb{S}}
\newcommand{\X}{\ensuremath{\mathfrak{X}}}
\newcommand{\Lie}{\mathcal{L}}
\newcommand{\I}{\mathrm{Im}}
\newcommand{\rank}{\mathrm{rank}} 
\newcommand{\id}{\mathrm{Id}} 
\newcommand{\dd}{\mathrm{d}}
\newcommand{\pr}{\mathrm{pr}}
\newcommand{\reg}{\mathrm{reg}}
\begin{document}
\title{Abelianization of Lie algebroids and Lie groupoids}

\author{Shuyu Xiao}
\address{Department of Mathematics, University of Illinois at Urbana-Champaign, 1409 W. Green Street, Urbana, IL 61801 USA}
\email{shuyux2@illinois.edu}
\thanks{Work partially supported by NSF grant DMS-2303586.}

\begin{abstract}
We investigate the abelianization of a Lie algebroid and provide a necessary and sufficient condition for its existence. We also study the abelianization of groupoids and provide sufficient conditions for its existence in the smooth category and a necessary and sufficient condition for its existence in the diffeological category.
\end{abstract}

\maketitle
\section{Introduction}
A Lie algebroid is called \emph{abelian} if its isotropy Lie algebras are all abelian. Given any Lie algebroid $\A$, one can wonder whether there exists an abelian Lie algebroid $\A^{\ab}$ satisfying the universal property of an abelian object. We refer to $\A^{\ab}$ as the \emph{abelianization} of $\A$ (see Section \ref{sec:alg} for the precise definition). 

The notion of abelianization was first introduced in \cite{contreras2018genus} in connection with the so-called genus integration of $\A$. The authors of \cite{contreras2018genus} observed that for a transitive Lie algebroid, the abelianization always exists. They also provide examples of non-transitive Lie algebroids for which the abelianization exists, as well as examples for which it does not exist.

In this paper we introduce a new definition of abelianization. In the transitive case, it coincides with the definition introduced in \cite{contreras2018genus}. Our definition has a sheaf-like nature and is more tractable than the one in \cite{contreras2018genus}. In order to extend the results from \cite{contreras2018genus} to non-transitive Lie algebroids, one can attempt to study the genus integration as a singular object, since the genus integration is a fiberwise quotient of the Weinstein groupoid. The sheaf-like definition allows us to study the local behaviour and to use tools such as sheaves of Lie-Rinehart algebras \cite{villatoro2021sheaveslierinehartalgebras}. Moreover, in various setups Lie algebroids must be treated as sheaf-like objects. For instance, in the holomorphic category, where the bracket is only defined on the sheaf of holomorphic sections of a vector bundle, our definition still makes sense, while the one from \cite{contreras2018genus} does not.

Recall that for any Lie algebroid $\A\to M$, the kernel of the anchor at $x\in M$,
\[ \g_x := \ker \rho_x \subset \A_x, \]
forms a Lie algebra known as the \emph{isotropy Lie algebra} at $x$. We denote the family of isotropy Lie algebras as
\[ \g_M := \bigcup_{x\in M} \g_x \subset \A. \]
Note that, in general, the dimensions of the isotropy Lie algebras $\g_x$ vary with $x,$ so $\g_M$ is not a vector subbundle. One can also define the commutator or derived bundle
\[ [\g_M, \g_M]:=\bigcup_{x\in M} [\g_x,\g_x]\subset\A. \]
which, again, may fail to be a vector subbundle. Our first main result can be stated as follows.

\begin{thm}
\label{thm:main:1}
A Lie algebroid $\A$ has an abelianization if and only if the closure $\overline{[\g_M, \g_M]} \subset \A$ is a vector subbundle.
\end{thm}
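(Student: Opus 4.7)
Set $\F := \overline{[\g_M, \g_M]}$. The plan is to show that, when $\F$ is a smooth vector subbundle, the quotient $\A^{\ab} := \A/\F$ is the abelianization, and conversely, that the existence of any abelianization forces $\F$ to be a subbundle.

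For the ``if'' direction, since $[\g_M,\g_M] \subset \ker\rho$ and the latter is a closed subset of $\A$, the inclusion $\F \subset \ker\rho$ is automatic, so the anchor descends to $\A/\F$. The critical step is to establish that $\F$ is a Lie ideal, i.e.\ $[\Gamma(\A), \Gamma(\F)] \subset \Gamma(\F)$. I plan to argue this via local Lie algebroid flows: a section $\alpha \in \Gamma(\A)$ induces a local one-parameter family of Lie algebroid automorphisms $\tilde\phi_t$ whose fiberwise restrictions $\g_x \to \g_{\phi_t(x)}$ are Lie algebra isomorphisms. These preserve commutator subalgebras, so $\tilde\phi_t$ preserves $[\g_M,\g_M]$ and, by continuity, its closure $\F$; differentiating at $t=0$ yields the bracket invariance. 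The resulting quotient $\A/\F$ is then a Lie algebroid whose isotropy at $x$ is $\g_x/\F_x$, a quotient of the abelian $\g_x/[\g_x,\g_x]$, hence abelian. For the universal property, any morphism $\phi \colon \A \to \B$ with $\B$ abelian sends $\g_x$ into the abelian $\ker(\rho_\B)|_x$, forcing $\phi([\g_x,\g_x]) = 0$; smoothness of $\phi$ extends the vanishing to the closure $\F$, giving the factorization through $\pi \colon \A \to \A/\F$.

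For the ``only if'' direction, assume an abelianization $\pi \colon \A \to \A^{\ab}$ exists and set $\F' := \ker\pi$, a vector subbundle of $\A$ (as the kernel of the surjective bundle map forced by universality). Applying the reasoning above to $\pi$ itself yields $\F' \supseteq \F$; the goal is the reverse inclusion, which automatically forces $\F$ to be a subbundle. My strategy uses that the fiber dimension of $\F$ is upper semicontinuous, so there is an open dense subset $U \subseteq M$ on which $\F|_U$ is a smooth subbundle of constant minimum rank $r$. Applying the ``if'' direction to $\A|_U$ identifies $\A|_U/\F|_U$ as the abelianization of $\A|_U$, and the restriction $\pi|_U$ factors through this local abelianization, giving $\F|_U \subseteq \F'|_U$. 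Once the reverse inclusion $\F'|_U \subseteq \F|_U$ is established on the generic stratum, one has $\rank\F' = r$, so that $\rank\F_x \leq \rank\F'_x = r$ everywhere while upper semicontinuity gives $\rank\F_x \geq r$; hence $\rank \F$ is constant and $\F = \F'$.

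The hardest step is this last inclusion $\F'|_U \subseteq \F|_U$ on the generic stratum. The natural tactic is to contradict the universality of $\pi$: given $v \in \F'|_U \setminus \F|_U$, produce a Lie algebroid morphism from $\A$ to an abelian Lie algebroid whose kernel avoids $v$. The local abelianization $\A|_U \to \A|_U/\F|_U$ accomplishes this on $U$, but globalizing to a morphism on all of $\A$ is delicate precisely because $\F$ may fail to be a subbundle off $U$; I expect the decisive input to be the smoothness of $\F'$ together with the closedness of $\F$ in $\A$, allowing the local construction to be glued to a global abelian morphism and contradicting the universal factorization through $\pi$. A secondary technicality in the sufficiency direction is the handling of sections with non-complete anchor, to be dealt with by working locally in time and space.
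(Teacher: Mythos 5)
Your ``if'' direction is essentially correct and takes a genuinely different route from the paper on the key point, the ideal property of $\overline{[\g_M,\g_M]}$: you differentiate the flow of a section (inner automorphisms restrict to Lie algebra isomorphisms on isotropies, hence preserve the fiberwise commutators and, being homeomorphisms of $\A$, their closure; since each fiber of the closure is a linear subspace once it is assumed to be a subbundle, the $t$-derivative stays inside). The paper instead produces a dense open saturated set where $[\g_M,\g_M]=\overline{[\g_M,\g_M]}$ (Lemma \ref{lem:dense}) and applies the Jacobi identity there. Both work; yours is arguably more direct, and it is the same flow technique the paper uses to prove saturation of the strata in Lemma \ref{lem:dense}.

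The ``only if'' direction, however, has a genuine gap in two places. First, your choice of the dense open set $U$ does not deliver what you need: upper semicontinuity of $x\mapsto\dim\overline{[\g_M,\g_M]}_x$ makes the minimum-rank locus \emph{open} but not dense (an upper semicontinuous integer-valued function can attain its minimum on a small open set only), and even where that rank is locally constant, local constancy of the rank of a \emph{closure} does not give smoothness --- the fibers of a closure need not even be linear subspaces a priori. The paper avoids this by first restricting to $M_\reg$, where $\A$ is regular, so that $[\g_x,\g_x]$ is spanned by $[s_i,s_j](x)$ for a smooth local frame $\{s_i\}$ of $\g_M$; only then does one get a dense open set on which $[\g_M,\g_M]$ itself is a smooth subbundle (Propositions \ref{prop:Lie:algebra:bundle} and \ref{prop:regular:abelianization}). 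Second, and more seriously, the inclusion $\ker p|_U\subseteq\overline{[\g_M,\g_M]}|_U$ is the entire content of the converse, and you only sketch a strategy: glue the local quotient $\A|_U\to\A|_U/\overline{[\g_M,\g_M]}|_U$ into a \emph{global} morphism onto an abelian algebroid. That globalization is exactly what fails when $\overline{[\g_M,\g_M]}$ is not a subbundle, so as stated the route is circular. It is also unnecessary: the definition of abelianization used here requires the universal property for morphisms out of $\A|_U$ for \emph{every} open $U$ (cf.\ Remark \ref{rem:definition:sheaf:like}), so a test morphism defined only over $U$ already contradicts universality; equivalently, $\A\ab|_U$ is the abelianization of $\A|_U$, and uniqueness forces $\ker p|_U=\overline{[\g_M,\g_M]}|_U$. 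If you have in mind only a global universal property, the converse requires a different and harder argument, which your proposal does not supply.
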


Using this result, we study some properties of the abelianization and, in particular, we show that it behaves in a functorial manner relative to pullbacks.
\smallskip

Next, we turn to groupoids. A groupoid is called \emph{abelian} if its isotropy groups are all abelian. As before, we define the \emph{abelianization of a groupoid} to be an abelian groupoid satisfying the universal property of an abelian object (see Section \ref{sec:gp} for the precise definition). The abelianization of a groupoid depends on the category where one works. Here we study the existence and properties of this construction in the smooth and in the diffeological categories.

In general, as already observed in \cite{contreras2018genus}, a Lie groupoid might not admit an abelianization in the smooth category. And when it does, the smooth abelianization might differ from the set-theoretical abelianization. Just as for Lie algebroids, the abelianization of groupoids is closely related to its isotropies, but it is more subtle as the commutator bundle can behave very differently. We have obtained the following partial results concerning abelianization of Lie groupoids.
\begin{proposition}
	Given a Lie groupoid $\G$, if the closure of the commutator $\overline{(\G_M,\G_M)}$ is a subgroupoid, then $\G$ has abelianization $\G/\overline{(\G_M,\G_M)}$. 	
\end{proposition}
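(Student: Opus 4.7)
The plan is to set $\B := \overline{(\G,\G)}$, which by hypothesis is a Lie subgroupoid of $\G$, and show that $\G/\B$ is an abelian Lie groupoid satisfying the universal property of the abelianization. First I would verify that $\B$ is a closed, wide, \emph{normal} subgroupoid of $\G$. Since a commutator $[g,h] = ghg^{-1}h^{-1}$ only makes sense for $g,h$ in a common isotropy group, $(\G,\G) = \bigcup_{x\in M}[\G_x,\G_x]$ is supported in the isotropy bundle, and each $[\G_x,\G_x]$ is normal in $\G_x$. Conjugation by any $g \in \G$ carries $[\G_{s(g)},\G_{s(g)}]$ onto $[\G_{t(g)},\G_{t(g)}]$, so $(\G,\G)$ is normal in $\G$; continuity of conjugation then extends normality to the closure $\B$.

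Next I would construct $\G/\B$ as a Lie groupoid. Under the standing hypothesis, $\B$ is closed, embedded, and normal, which places us in the standard setting for quotients of Lie groupoids by closed normal Lie subgroupoids. The isotropy of the quotient at $x$ is $\G_x/\B_x$ where $\B_x = \B \cap \G_x$; since $[\G_x,\G_x]\subset\B_x$, each such quotient is abelian, so $\G/\B$ is an abelian Lie groupoid. Denote by $q : \G \to \G/\B$ the projection, which is a smooth surjective submersion and a groupoid morphism.

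To verify the universal property, let $\phi: \G \to \HH$ be a Lie groupoid morphism with $\HH$ abelian. The restriction of $\phi$ to any isotropy $\G_x$ lands in the abelian group $\HH_{\phi(x)}$, so $\phi$ sends every commutator to an identity arrow. Since the submanifold of identity arrows is closed in $\HH$ and $\phi$ is continuous, $\phi$ sends all of $\B$ to identity arrows as well. Hence $\phi$ factors as $\phi = \tilde\phi \circ q$ for a unique groupoid morphism $\tilde\phi: \G/\B \to \HH$, and smoothness of $\tilde\phi$ follows from that of $\phi$ together with the fact that $q$ is a surjective submersion.

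The main obstacle is justifying the quotient construction in full. We need $\B$ to define a regular equivalence relation on $\G$ whose quotient carries a compatible Lie groupoid structure, and we need the factored morphism $\tilde\phi$ to be smooth. While this is standard when $\B$ is a closed, embedded, normal Lie subgroupoid, care is required because $\B$ may be highly non-transitive and the fibers $\B_x$ may have varying dimension; the usual quotient theorems for Lie groupoids must therefore be invoked in sufficient generality to accommodate this lack of regularity.
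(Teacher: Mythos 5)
Your proof is correct and follows essentially the same route as the paper: show that $\overline{(\G,\G)}$ is a normal subgroupoid, invoke the quotient theorem for closed normal Lie subgroupoids, and observe that any morphism to an abelian groupoid kills the closure of the commutator. The one step the paper treats more carefully is normality of the closure: conjugation by a fixed $h$ does not literally apply to approximating commutators $g_i\to g$ whose base points move, so one must simultaneously choose $h_i\to h$ with $s(h_i)=t(g_i)$ (possible since $s$ is a submersion) and pass to the limit of $h_i\,g_i\,h_i^{-1}$ --- your appeal to ``continuity of conjugation'' glosses over exactly this point.
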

\begin{proposition}
	Given a Lie groupoid $\G$, if the fiberwise closure $\overline{(\G_M,\G_M)}^s$ is a closed submanifold, then $\G$ has abelianization $\G/\overline{(\G_M,\G_M)}^s$. 	
\end{proposition}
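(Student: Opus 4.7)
The plan is to verify that $\overline{(\G,\G)}^s$ is a closed, normal, embedded Lie subgroupoid of $\G$ sitting inside the isotropy bundle, and then to construct $\G/\overline{(\G,\G)}^s$ as a quotient Lie groupoid with the desired universal property. Since $[\G_x,\G_x]\subset\G_x$ and $\G_x=s^{-1}(x)\cap t^{-1}(x)$ is closed in $s^{-1}(x)$, the fiberwise closure satisfies $\overline{(\G,\G)}^s=\bigcup_{x\in M}\overline{[\G_x,\G_x]}$, and each fiber is a closed normal Lie subgroup of the isotropy Lie group $\G_x$.

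First I would verify the subgroupoid and normality axioms. Units, inverses, and fiberwise products stay in $\overline{(\G,\G)}^s$ by continuity of the Lie group operations in each $\G_x$. For normality, given any arrow $g:x\to y$, conjugation $h\mapsto g^{-1}hg$ is a Lie group isomorphism $\G_y\to\G_x$ carrying $[\G_y,\G_y]$ to $[\G_x,\G_x]$, hence by continuity sends $\overline{[\G_y,\G_y]}$ into $\overline{[\G_x,\G_x]}$. Combined with the hypothesis that $\overline{(\G,\G)}^s$ is a closed embedded submanifold of $\G$, this realizes it as a closed, normal, embedded bundle-of-Lie-groups subgroupoid.

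Next I would form the quotient. The natural left action of $\overline{(\G,\G)}^s$ on $\G$ along target fibers, $h\cdot g=hg$ for $h\in\overline{[\G_{t(g)},\G_{t(g)}]}$, is free, and its orbits are the closed embedded cosets $\overline{[\G_{t(g)},\G_{t(g)}]}\cdot g$. A standard quotient theorem then equips $\G/\overline{(\G,\G)}^s$ with a unique smooth structure making the projection a surjective submersion, and the groupoid operations of $\G$ descend. The isotropy at $x$ of the quotient equals $\G_x/\overline{[\G_x,\G_x]}$, a Hausdorff abelian Lie group, so $\G/\overline{(\G,\G)}^s$ is abelian.

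For the universal property, any Lie groupoid morphism $\phi:\G\to\HH$ with $\HH$ abelian restricts on each isotropy to a Lie group morphism into an abelian group, hence kills $[\G_x,\G_x]$ and, by continuity, also $\overline{[\G_x,\G_x]}$. So $\phi$ is constant on the orbits of $\overline{(\G,\G)}^s$ and descends to a groupoid morphism from the quotient; smoothness follows from the quotient projection being a surjective submersion. The main obstacle I anticipate is the previous step: extracting the Lie groupoid structure on $\G/\overline{(\G,\G)}^s$ from the bare submanifold hypothesis. Freeness is automatic, but properness of the action, closedness of the orbit equivalence relation, and Hausdorffness of the quotient require care, and must be deduced from the assumption that $\overline{(\G,\G)}^s$ is closed in $\G$ (not merely closed fiberwise) together with the bundle-of-Lie-groups structure along the source fibers.
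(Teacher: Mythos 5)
Your argument is correct in substance but reaches the conclusion by a more hands-on route than the paper. The paper's proof is a two-line reduction: since $(\G,\G)\subset\overline{(\G,\G)}^s\subset\overline{(\G,\G)}$, the hypothesis that $\overline{(\G,\G)}^s$ is \emph{closed} forces $\overline{(\G,\G)}^s=\overline{(\G,\G)}$; being contained in the isotropy and fiberwise a subgroup, it is a subgroupoid, hence normal by Lemma \ref{lem:clo:nor} (whose proof is a limit argument $h_i g_i h_i^{-1}\to hgh^{-1}$), and the quotient is then a Lie groupoid by the cited standard result (Proposition \ref{prop:normal}, from Mackenzie). You instead prove normality directly, observing that conjugation by an arrow $g:x\to y$ is a Lie group isomorphism $\G_y\to\G_x$ carrying $[\G_y,\G_y]$ onto $[\G_x,\G_x]$ and hence, by continuity, the fiberwise closures onto each other; this is clean and avoids the identification with $\overline{(\G,\G)}$ altogether. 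The one place where you create unnecessary work is the quotient step: rather than building the smooth structure from a free action and worrying about properness, closedness of the orbit relation, and Hausdorffness, you can simply invoke the standard theorem that the quotient of a Lie groupoid by a closed normal Lie subgroupoid is again a Lie groupoid --- which is exactly what your closed embedded normal subgroupoid is, and is precisely Proposition \ref{prop:normal}. (If you do want the identification $\overline{(\G,\G)}^s=\overline{(\G,\G)}$, note it falls out of closedness for free, and it is what makes the kernel genuinely closed in $\G$ rather than only fiberwise.) Two small omissions: the universal property in the paper's definition quantifies over morphisms $\psi:\G|_U\to\HH$ for arbitrary open $U\subset M$, not just morphisms defined on all of $\G$ --- your continuity argument applies verbatim to each restriction, but it should be stated; and the uniqueness of the induced map $\tilde\psi$ should be noted (it is automatic since the projection is surjective).
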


On the other hand, diffeologies provide a more flexible setup to study differential geometry of singular spaces. For example, it has been used in solving integration problems of  Lie algebroids as in \cite{villatoro2023integrability}, of singular Lie subalgebroids \cite{singinte} and of singular foliations \cite{garmendia2022integration}. As a nice middle ground between topological and smooth spaces, we have the following result about abelianization of diffeological groupoids.
\begin{thm}
	A (locally subductive) diffeological groupoid $\G$ has a (locally subductive) abelianization $\G/(\G_M,\G_M)$.
\end{thm}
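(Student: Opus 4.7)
The plan is to take full advantage of the fact that in the diffeological category, quotients are well behaved: every set-theoretic quotient carries a canonical quotient diffeology, and universal properties of such quotients are automatic for smooth maps. The key observation is that the obstruction encountered in the smooth case (needing the closure of the commutator bundle to be a submanifold) disappears, because we do not need the commutator subgroupoid to be a manifold in any sense.

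First, I would define the commutator normal subgroupoid $(\G,\G) \subset \G$. Since commutators $ghg^{-1}h^{-1}$ only make sense for elements with common source and target, $(\G,\G)$ sits inside the isotropy bundle: at each $x \in M$ it is the commutator subgroup $[\G_x^x, \G_x^x]$ of the isotropy group. I would equip $(\G,\G)$ with the subset diffeology inherited from $\G$. Normality is immediate from its definition in terms of isotropies. Next, I would form the set-theoretic quotient $\G/(\G,\G)$, where two arrows are identified when they differ by an element of $(\G,\G)$, and endow it with the quotient diffeology. Standard arguments in diffeology (the quotient of a diffeological groupoid by a normal subgroupoid is again a diffeological groupoid, since source, target, multiplication, inverse, and unit are smooth with respect to the quotient) give $\G/(\G,\G)$ the structure of a diffeological groupoid over $M$. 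By construction, its isotropy groups are $\G_x^x/[\G_x^x,\G_x^x]$, hence abelian.

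The verification of the universal property is then essentially formal. Given a smooth morphism $\phi : \G \to \HH$ into an abelian diffeological groupoid, since $\HH$ has abelian isotropies we have
\[
\phi(ghg^{-1}h^{-1}) = \phi(g)\phi(h)\phi(g)^{-1}\phi(h)^{-1} = 1_{\phi(x)}
\]
for every pair of composable $g,h$ in an isotropy group of $\G$. Thus $\phi$ is constant on the equivalence classes defining $\G/(\G,\G)$ and so descends to a unique set-theoretic morphism $\bar\phi : \G/(\G,\G) \to \HH$. Smoothness of $\bar\phi$ follows immediately from the universal property of the quotient diffeology, and the fact that $\bar\phi$ is a groupoid morphism follows from that of $\phi$. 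Uniqueness is automatic because the projection $\G \to \G/(\G,\G)$ is surjective.

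For the parenthetical statement about local subductions, I would argue as follows. If the source map $s : \G \to M$ is a local subduction, then the composition $\G \to \G/(\G,\G) \xrightarrow{\bar s} M$ is a local subduction, and since $\G \to \G/(\G,\G)$ is itself a subduction (being a quotient projection), a standard two-out-of-three property for local subductions of diffeological spaces forces $\bar s$ to be locally subductive as well. The main (minor) obstacle here is to check carefully that the subset diffeology on $(\G,\G)$, the quotient diffeology on $\G/(\G,\G)$, and the groupoid operations are all mutually compatible; but unlike in the smooth category, no regularity of the commutator bundle is required, so no analogue of Theorem \ref{thm:main:1} is needed.
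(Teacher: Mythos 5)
Your proposal is correct and follows essentially the same route as the paper: form the set-theoretic quotient $\G/(\G,\G)$ with the quotient diffeology, check the structure maps descend smoothly, verify the universal property via the universal property of quotient diffeologies, and obtain local subductivity of $\bar s$ by lifting a plot through $s$ and pushing it down along $q$ (your ``two-out-of-three'' observation is exactly this argument made explicit). The only detail you gloss over is that the paper's universal property is required for morphisms defined on restrictions $\G|_U$ for arbitrary open $U\subset M$, which needs the (easy but necessary) check that the subset diffeology on $(\G/(\G,\G))|_U$ agrees with the quotient diffeology on $\G|_U/(\G|_U,\G|_U)$.
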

\subsection*{Acknowledgements.} The author would like to thank Rui Loja Fernandes for his help and supervision. The author would also like to thank Ivan Contreras for his feedback on writing the paper. Finally, the author would like to thank the referees for their helpful remarks and suggestions on the revision.

\section{Abelianization of Lie algebroids}
\label{sec:alg}
\subsection{Definition of abelianization}
\subsubsection{Background on Lie algebroids}
We recall that a \textbf{Lie algebroid} over $M$ is a triple $(\A,\ \rho, [\cdot,\cdot])$, where
	\begin{itemize}
	 	\item $\A\to M$ is a vector bundle;
	 	\item $\rho:\A\rightarrow TM$ is a bundle map, called the \emph{anchor map};
	 	\item $[\cdot,\cdot]$ is a Lie bracket on the space of sections $\Gamma(\A)$
	\end{itemize} 
such that the Leibniz condition
	\[ [s,fs']=f[s,s']+(\Lie_{\rho(s)}f)s'
	\] 
is satisfied for all $s,s' \in \Gamma(\A)$ and $f\in C^{\infty}(M)$.
	
We often denote the Lie algebroid by $\A$ or $\A\rightarrow M$. The image of $\rho$ is an integrable distribution $T\F\subset TM$. We call the (singular) foliation integrating this distribution the \emph{foliation induced by $\A$} and the leaves of this foliation the \emph{leaves of $\A$}. When $\rho(\A)=TM$, we say that $\A$ is \emph{transitive}.  If $\rho(\A)$ has constant rank, we say that $\A$ is \emph{regular}.  If $\rho=0$, we call $\A$ a \emph{bundle of Lie algebras}. 

\begin{remark}
    Let us take the opportunity to clarify some terminology here. 
    \begin{itemize}
        \item By a \emph{family of Lie algebras} over a manifold $M$ we mean simply a collection of Lie algebras parametrized by $M$.
        \item As we just defined above, a \emph{bundle of Lie algebras} is a Lie algebroid with zero anchor. One can think of a bundle of Lie algebras as a vector bundle equipped with a smoothly varying family of Lie brackets on the fibers.
        \item A \emph{Lie algebra bundle} is essentially a ``locally trivial'' bundle of Lie algebras. To be more precise, it is a fiber bundle with typical fiber a Lie algebra, and an atlas of local trivializations preserving the Lie algebra structures on the fibers.
    \end{itemize}
\end{remark}
For $x\in M$, we have $\g_x:=\ker\,\rho_x$ the \emph{isotropy Lie algebra} at $x\in M$.
As already mentioned in the Introduction, \[\g_M:=\ker(\rho)=\bigcup_{x\in M}\g_x\] is in general not of constant rank, and thus just a family of Lie algebras. We call it - somewhat misleadingly - the \emph{isotropy bundle} of $\A$. Note that when $\A$ is regular, $\g_M$ is actually a subbundle of $\A$ and thus a bundle of Lie algebras. Furthermore, if $\A$ is transitive then it is even a Lie algebra bundle. The \emph{commutator bundle} is defined as
\[ [\g_M, \g_M]:=\bigcup_{x\in M} [\g_x,\g_x]\subset\A \]
and is in general not a vector bundle, even when $\A$ is regular. When $\A$ is transitive however, $[\g_M,\g_M]$ is smooth.

\begin{definition}
    A Lie algebroid $\A$ is called \emph{abelian} if all of its isotropy Lie algebras are abelian.
\end{definition}

Notice that $\A$ is abelian if and only if $[\g_M,\g_M]=0$.

Let $\A\rightarrow M$ and $\B\rightarrow N$ be Lie algebroids, and consider a bundle map 
		\[\begin{tikzcd}[ampersand replacement=\&]
			\A \arrow[r, "F"]\arrow[d] \& \mathcal{B}\arrow[d]\\
			M\arrow[r, "f"]\& N
		\end{tikzcd}\]
If $f$ is a diffeomorphism, then one has an induced map at the level of sections $\tilde{F}:\Gamma(\A)\to\Gamma(\B)$ and one says that $\phi=(F,f)$ is a Lie algebroid morphism if:
\begin{enumerate}[(i)]
    \item $\dd f\circ \rho_{\A}=\rho_{\B}\circ F$;
    \item $\tilde{F}([s,s']_\A)=[\tilde{F}(s),\tilde{F}(s')]_\B$ for any $s,s'\in\Gamma(\A)$.
\end{enumerate}
In general, there is no induced map at the level of sections. For $s\in\Gamma(\A)$ we write $F(s)$ for the section $F\circ s\in\Gamma(f^*\A)$ and we replace (ii) by the following condition:
\begin{enumerate}
    \item[(ii)'] for any sections $s,s'\in\Gamma(\A)$ such that $F(s)=\sum_i a_i f^*{s_i}$ and $F(s')=\sum_j b_j f^*{s_j}$, one has
\[ F([s,s']_{\A})=\sum_{i,j}a_ib_if^*[s_i,s_j]_{\B}+\sum_j(\Lie_{\rho(s)}b_i)f^*(s_j)+\sum_i(\Lie_{\rho(s')}a_i)f^*(s_i). \]
\end{enumerate}
An alternative way to define algebroid morphisms is by using the algebroid de Rham differential on $\A$-forms. One defines $k$-forms to be the sections $\Omega^k(A):=\Gamma(\wedge^k A^*)$ and defines 
\[
\dd_\A:\Omega^k(\A)\to\Omega^{k+1}(\A) 
\]
by 
\begin{align*}
\dd_A\omega(s_0,\dots,s_k):=\sum_i (-1)^i & \Lie_{\rho(s_i)}\omega(s_0,\dots,\widehat{s_i},\dots,s_k)+\\
&\sum_{i<j}(-1)^{i+j}\omega([s_i,s_j],s_0,\dots,\widehat{s_i},\dots,\widehat{s_j},\dots,s_k).
\end{align*}
A bundle map $\phi:\A\to\B$, as above, induces a pull-back map between such forms, and it is a Lie algebroid morphism if and only if
\[ \phi^*\dd_\B=\dd_\A \phi^*. \]
\subsubsection{Abelianization}
Our definition of the abelianization of a Lie algebroid is as follows.

\begin{definition}
	An \textbf{abelianization} of a Lie algebroid $\mathcal{A}\rightarrow M$ consists of
	\begin{enumerate}[(i)]
		\item an abelian Lie algebroid $\mathcal{A}^{\ab}\rightarrow M$, and
		\item a surjective morphism $p:\,\mathcal{A}\rightarrow\mathcal{A}^{\ab}$ covering the identity,
	\end{enumerate}
	such that for any open subset $U\subset M$, any abelian Lie algebroid $\mathcal{B}\rightarrow N$ and morphism $\phi:\mathcal{A}|_U\rightarrow \mathcal{B}$, there is a unique morphism $\tilde{\phi}$ such that the following diagram commutes:
	\[\begin{tikzcd}[ampersand replacement=\&]
			\mathcal{A}|_U \arrow[r, "\phi"]\arrow[d,"p"] \& \mathcal{B}. \\
			\mathcal{A}^{\ab}|_U\arrow[dashed]{ur}[swap]{\exists ! \tilde{\phi}}
    \end{tikzcd}\]
\end{definition}

Clearly, if the abelianization of $\A$ exists, it is unique up to isomorphisms. We will call $\A\ab$ the abelianization of $\A$ when it does not cause confusion. We will denote the anchor of $\A\ab$ by $\rho\ab$ and its bracket by $[\cdot,\cdot]_{ab}$. We will use $(\g\ab)_x$ to denote the isotropy Lie algebras of $\A\ab$ and $\g\ab_x$ to denote the abelianization of the isotropy Lie algebras $\g_x$. In general, these two Lie algebras are different.

\begin{remark}
\label{rem:definition:sheaf:like}
Our definition of abelianization is different from the original one in \cite{contreras2018genus} -- from now on we will refer to their definition as the \emph{CF-abelianization}. The difference is that our definition has a sheaf-like flavor, whereas the CF-abelianization only requires the ``global'' universal property for $U=M$. It follows immediately that our definition is stronger than that of the CF-abelianization. Additionally, if the abelianization exists then it coincides with the CF-abelianization. In particular, the two definitions are equivalent in the case of \emph{transitive} Lie algebroids since these always admit an abelianization (this follows from Theorem \ref{th:main}). Thus all the results in \cite{contreras2018genus} still hold for our definition.
\end{remark}

\begin{example}[Lie algebras]
	Let $\A=\g$ be a Lie algebra. Then its abelianization is
	\[ \g\ab=\g/[\g,\g]. \]
\end{example}

\begin{example}[Transitive Lie algebroids \cite{contreras2018genus}]
	Let $\A\rightarrow M$ be a transitive Lie algebroid. Its isotropy bundle $\g_M$ is then a vector subbundle of $\A$ and in fact an ideal. It is proved in \cite{contreras2018genus} that the abelianization of $\A$ is the quotient algebroid
	\[ \A^{\ab}=\A/[\g_M,\g_M]. \] 
	In particular, $(\g\ab)_x\simeq \g_x\ab$ and $[\g_x,\g_x]\subset\ker(p_x)$, where $p:\A\to\A\ab$ is the quotient map.
\end{example}

In the next sections we aim to investigate the existence of abelianization of general Lie algebroids, beginning with bundles of Lie algebras, progressing to the regular ones and finally the general case. 
\subsection{Bundles of Lie algebras}

Bundle of Lie algebras, even when admitting an abelianization, can already have different properties from the transitive case. 

\begin{example}[\cite{contreras2018genus}]\label{ex:basic}
	Consider $\A=\R^2\times\R\rightarrow \R$ equipped with the bracket 
	\[ [e_1,e_2]_x=xe_1,\] 
	where $e_1$ and $e_2$ denote the constant sections $x\mapsto (1,0,x)$ and $x\mapsto (0,1,x)$. This bundle of Lie algebras has abelianization the bundle of abelian Lie algebras
	 \[ \A\ab=\R\times\R\rightarrow\R, \]
	 Notice that at $x=0$, the isotropy of $\A$ is $\g_0=\g\ab_0\simeq\R^2$, so abelian, but $\A\ab$ has smaller isotropy, namely $(\g\ab)_0=\R$.
\end{example}

In general, a bundle of Lie algebras may not admit an abelianization. Before looking at more examples, let us first take a look at some basic property of an abelianization. The following lemma was proved in \cite{contreras2018genus}. It follows from the fact that $p$ is a surjective morphism covering the identity and that $\A\ab$ is abelian, and is thus easily seen to hold also for our definition of abelianization.
 \begin{lemma}
 	If $\A\ab$ is the abelianization of $\A$, then $\I(\rho)=\I(\rho\ab)$ and $[\g_M,\g_M]\subset\ker p$.
 \end{lemma}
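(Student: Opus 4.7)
The plan is to treat the two conclusions separately. Both will follow rapidly once one unpacks what it means for $p:\A\to\A\ab$ to be a surjective Lie algebroid morphism covering the identity, together with the abelianness of $\A\ab$.

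For $\I(\rho)=\I(\rho\ab)$, I would first observe that because $p$ covers the identity on $M$, condition (i) in the definition of a Lie algebroid morphism specializes to $\rho\ab\circ p=\rho$ as bundle maps over $M$. Combined with the fiberwise surjectivity of $p$, this immediately yields $\rho_x(\A_x)=\rho\ab_x(p_x(\A_x))=\rho\ab_x(\A\ab_x)$ at every $x\in M$, which is precisely the claimed equality of images.

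For $[\g_M,\g_M]\subset\ker p$, I would pick $v,w\in\g_x$ and extend them to arbitrary local sections $s,s'$ of $\A$ with $s(x)=v$ and $s'(x)=w$; since $v,w\in\ker\rho_x$, the anchors $\rho(s)$ and $\rho(s')$ automatically vanish at $x$. Because $p$ covers the identity, it induces a genuine map on sections, and the morphism property gives $p([s,s']_\A)=[p(s),p(s')]_{ab}$. The identity $\rho\ab\circ p=\rho$ forces $p(s)_x,p(s')_x\in(\g\ab)_x$, which is abelian by hypothesis. A standard Leibniz-rule computation shows that whenever the anchors of two sections vanish at a common point, the value of their bracket at that point computes the intrinsic isotropy Lie algebra bracket in the corresponding fiber and is independent of the chosen extensions. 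Applied in $\A$ this identifies $[s,s']_\A(x)$ with the isotropy bracket $[v,w]$ in $\g_x$, and applied in $\A\ab$ together with the abelianness of $(\g\ab)_x$ it gives $[p(s),p(s')]_{ab}(x)=0$. Evaluating the morphism identity at $x$ then yields $p([v,w])=0$.

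The only point requiring mild care is the Leibniz-rule verification above, which is needed because $\g_M$ is in general not a subbundle and hence $v$ cannot always be extended to a local section of $\g_M$. This verification is standard: if $s_1$ is another extension of $v$, write $s_1-s=\sum_i f_i t_i$ with $f_i(x)=0$, and then $[s_1,s']_\A(x)-[s,s']_\A(x)$ vanishes because both $f_i(x)=0$ and $(\Lie_{\rho(s')}f_i)(x)=\rho(s')_x(f_i)=0$. Beyond this, no genuine obstacle is expected; the statement is essentially a bookkeeping exercise expressing the compatibility of $p$ with the anchor and bracket, combined with the abelianness of the target.
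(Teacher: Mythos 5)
Your proposal is correct, and it is a complete, self-contained argument for a statement that the paper itself does not prove (it simply defers to the reference \cite{contreras2018genus}, remarking that the lemma carries over to the new definition of abelianization). Your proof uses only that $p$ is a surjective morphism over the identity onto an abelian Lie algebroid, and the one delicate point --- that $[s,s']_\A(x)$ depends only on $s(x),s'(x)\in\g_x$ and recovers the isotropy bracket, verified via the Leibniz rule since $\g_M$ need not be a subbundle --- is handled correctly; this is exactly the standard route one would expect.
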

Hence, $\A$ and $\A\ab$ have the same foliation. Since $p:\A\to\A\ab$ has closed kernel, we also deduce that: 

 \begin{corollary}\label{lem:ker:comm}
 	If $\A\ab$ is the abelianization of $\A$, then $\overline{[\g_M,\g_M]}\subset\ker p$.
 \end{corollary}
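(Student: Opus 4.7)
The plan is essentially a very short topological argument building directly on the preceding lemma, so I will describe it together with the one subtlety worth flagging. By the lemma, $[\g_M,\g_M]\subset \ker p$ as subsets of $\A$, so it suffices to prove that $\ker p$ is closed in the total space $\A$; the conclusion then follows because any closed set containing $[\g_M,\g_M]$ must also contain its closure.

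To show $\ker p$ is closed, I would argue as follows. By definition of abelianization, $p:\A\to\A\ab$ is a morphism of vector bundles over $M$ covering the identity, and in particular it is a smooth map between the total spaces, hence continuous. The zero section $0_{\A\ab}\subset\A\ab$ is a closed embedded submanifold, so it is closed as a subset. Therefore
\[
\ker p \;=\; p^{-1}(0_{\A\ab})
\]
is closed in $\A$. Combining with the lemma gives $\overline{[\g_M,\g_M]}\subset\ker p$.

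The one subtlety worth highlighting, and the reason the statement is not completely trivial, is that $\ker p$ in general is \emph{not} a vector subbundle of $\A$: the dimensions of the fibers $\ker p_x$ may jump with $x$, exactly as in Example \ref{ex:basic}. So one cannot appeal to closedness via a rank argument or by realizing $\ker p$ as the total space of a subbundle. However, closedness of $\ker p$ as a subset of $\A$ only requires continuity of $p$ together with closedness of the zero section, and does not depend on constancy of rank. Hence no further hypothesis is needed beyond the existence of the abelianization, and the corollary holds in full generality.
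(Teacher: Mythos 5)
Your argument is correct and is essentially the paper's own: the preceding lemma gives $[\g_M,\g_M]\subset\ker p$, and $\ker p=p^{-1}(0_{\A\ab})$ is closed in $\A$ because $p$ is continuous and the zero section is closed, so the closure of $[\g_M,\g_M]$ lands in $\ker p$ as well. One correction to your closing remark, though it does not affect the proof: since the abelianization map $p$ is by definition a \emph{surjective} bundle morphism covering the identity, each $\ker p_x$ has constant dimension $\rank\A-\rank\A\ab$, so $\ker p$ \emph{is} a vector subbundle (the paper uses exactly this fact later, e.g.\ in the proof of Theorem \ref{th:main}); what may jump in rank, as in Example \ref{ex:basic}, is $[\g_M,\g_M]$, not $\ker p$.
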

 
\begin{example}[Bundles of Lie algebras with no abelianization]
	Let us replace the bracket in Example \ref{ex:basic} by 
	\[ [e_1,e_2]_x:=f(x)e_1, \] 
	where $f$ vanishes on an open interval $I=(a,b)\subsetneq\R$. Then the resulting bundle no longer admits an abelianization. Indeed, a potential abelianization $\A\ab$ must have $\rank(\ker p)\geq 1$. But then 
	\[ (\A\ab)_I\subsetneq (\A_I)\ab\simeq \R^2\times I, \] 
	a contradiction (cf.~Remark \ref{rem:definition:sheaf:like}). 
	\smallskip 

	One can also consider the bundle of Lie algebras $\A=\R^2\times\R^2\rightarrow \R^2$ equipped with the bracket
	\[ [e_1,e_2]_{(x,y)}:=xe_1+ye_2.\] 

 The commutator Lie subalgebra is a one dimensional subspace everywhere except for the origin. To be more specific, this one dimensional subspace is essentially ``radiating around the origin''. Thus although $[g_0,g_0]=0$ is of lower dimension, $\overline{[\g_M,\g_M]}|_0\simeq \R^2$ is actually of higher dimension. By Lemma \ref{lem:ker:comm}, a potential abelianization would be $\A\ab=\{0\}\times\R^2$. However, on the open set $O=\R^2\backslash\{0\}$, the quotient map $\A|_O\to \A|_O/[\A|_O,\A|_O]$ is then not able to be recovered since the latter bundle has rank $1$.
 
\end{example}

The previous example can be summarized by saying that if on an open set $O\subsetneq M$ the rank of $[\g_M,\g_M]$ is lower than the highest rank
of $\overline{[\g_M,\g_M]}$, then the abelianization cannot exists since one would obtain
\[ (\A\ab)|_O\subsetneq (\A|_O)\ab. \]
This lead us to the following characterization of the bundles of Lie algebras that admit an abelianization.

\begin{proposition}
    \label{prop:Lie:algebra:bundle}
	A bundle of Lie algebras $\g_M$ has an abelianization if and only if $\overline{[\g_M,\g_M]}$ is a subbundle of $\g_M$. In this case, 
	\[ (\g_M)^{\ab}=\g_M/\overline{[\g_M,\g_M]}. \]
\end{proposition}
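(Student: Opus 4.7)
My plan is to take the candidate abelianization to be the quotient $\g_M/K$ with $K:=\overline{[\g_M,\g_M]}$, and show that this works precisely when $K$ is a subbundle.

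For the "if" direction, I assume $K$ is a vector subbundle. Since the anchor of $\g_M$ vanishes, the bracket on sections is $C^\infty$-linear and the pointwise inclusion $[\g_x,K_x]\subseteq[\g_x,\g_x]\subseteq K_x$ makes $K$ an ideal, so $\g_M/K$ is an abelian bundle of Lie algebras. For the universal property, given $\phi:\g_M|_U\to\B$ into an abelian $\B$ covering some $f:U\to N$, I would invoke $\phi^*\dd_\B=\dd_{\g_M}\phi^*$ applied to a 1-form of $\B$ and two isotropy vectors to deduce that the fiber restriction $\phi_x:\g_x\to\g^{\B}_{f(x)}$ is a Lie algebra homomorphism: the anchor-derivative terms in the Koszul formula drop out at points where the image anchor is zero. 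Since $\g^{\B}_{f(x)}$ is abelian, $\phi$ kills $[\g_x,\g_x]$ at each $x$, and continuity of $\phi$ extends the vanishing to the closure $K|_U$. The factor map $\tilde\phi:(\g_M/K)|_U\to\B$ is then smooth, uniquely determined by $\phi$ since $p$ is surjective, and a Lie algebroid morphism because $p^*$ is injective on forms.

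For the "only if" direction, assume $\g_M^{\ab}$ exists with projection $p$, and set $K':=\ker p$, a subbundle of constant rank $d$ per connected component. The inclusion $\overline{[\g_M,\g_M]}\subseteq K'$ follows from the corollary above. To reverse it, I would locate a dense open $V\subseteq M$ where $[\g_M,\g_M]$ is itself a subbundle of rank $d$. Since $x\mapsto\dim[\g_x,\g_x]$ is integer valued and lower semi-continuous, the locus $V$ where this dimension attains its local maximum $k$ is open and nonempty; a local frame built from brackets $[s_i,s_j]$ that are linearly independent at a given point of $V$ shows $[\g_M,\g_M]|_V$ is a subbundle of rank $k$. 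Via the sheaf-like identification $(\g_M|_V)^{\ab}=\g_M^{\ab}|_V$ and the uniqueness of the abelianization, the "if" direction forces $K'|_V=[\g_M,\g_M]|_V$, hence $k=d$. Then, given $v\in K'_{x_0}$, extending $v$ to a local section $\sigma$ of the subbundle $K'$ and choosing $y_n\to x_0$ with $y_n\in V$ yields $\sigma(y_n)\in[\g_{y_n},\g_{y_n}]\subseteq[\g_M,\g_M]$, so $v=\lim\sigma(y_n)\in\overline{[\g_M,\g_M]}$, completing $K'\subseteq\overline{[\g_M,\g_M]}$.

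The hardest step will be proving that the regular locus $V$ is dense. I would argue by contradiction: any nonempty open $W$ disjoint from $V$ satisfies $\max_{y\in W}\dim[\g_y,\g_y]=:k_W<d$, and applying the same analysis inside $W$ produces an open subset on which $K'$ would have rank $k_W<d$, contradicting the constancy of the rank of $K'$ on the component. The auxiliary claim that Lie algebroid morphisms restrict to Lie algebra homomorphisms on isotropy is classical, but it deserves explicit verification through the Chevalley--Eilenberg differential because the paper admits morphisms over arbitrary base maps, so ordinary pullback of sections is unavailable.
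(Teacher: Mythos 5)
Your argument is correct, and its engine is the same as the paper's: $\ker p$ has locally constant rank, whereas the commutator's rank is strictly smaller on some nonempty open set whenever $\overline{[\g_M,\g_M]}$ fails to be a subbundle, and restricting the abelianization to that open set produces a contradiction. The packaging differs in two ways worth noting. For the converse, the paper stratifies $M$ by the rank of the \emph{closure} $\overline{[\g_M,\g_M]}$ and exhibits an explicit abelian test morphism --- the projection of $\g_M|_O$ onto a complement of $\overline{[\g_M,\g_M]}|_O$ --- which cannot factor through $p$; you instead stratify by the rank of $[\g_M,\g_M]$ itself and replace the test morphism by restriction plus uniqueness of the abelianization on each stratum. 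These are equivalent (your ``uniqueness on the stratum'' is exactly the assertion that the paper's test morphism does not factor), but your choice of stratum has a small technical advantage: on the top stratum $[\g_M,\g_M]$ is visibly locally trivial, being spanned by finitely many bracket sections of a local frame, whereas local triviality of the constant-rank locus of the closure, which the paper's complement construction uses, takes an extra word of justification. Your density argument for $V$ together with the final limit step $K'\subseteq\overline{[\g_M,\g_M]}$ reproves, in the bundle-of-Lie-algebras case, what the paper isolates as Lemma \ref{lem:dense} and reuses later, so your proof is self-contained but duplicates work the paper amortizes; and your verification via $\phi^*\dd_\B=\dd_{\g_M}\phi^*$ that morphisms kill isotropy commutators makes explicit a step the paper leaves implicit. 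No gaps.
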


\begin{proof} $\quad$\smallskip
	
$(\Leftarrow)$ Let $\g_M$ be a bundle of Lie algebras. If $\overline{[\g_M,\g_M]}$ is a subbundle of $\g_M$, then clearly $\g_M/\overline{[\g_M,\g_M]}$ is an abelian bundle of Lie algebras. Since the quotient map is a surjective morphism covering the identity, we only need to show that it satisfies the universal property. So let $U\subset M$ be an open set, $\B\rightarrow N$ an abelian Lie algebroid and $\phi:(\g_M)|_U\rightarrow\B$ some morphism. Since $\ker \phi$ is closed in $(\g_M)|_U$ and $[\g_M,\g_M]|_U\subset\ker \phi$, we must have $\overline{[\g_M,\g_M]}|_U\subset\ker(\phi)$. It follows that the universal property holds.
\smallskip
	
$(\Rightarrow)$ Suppose $\overline{[\g_M,\g_M]}$ is not a subbundle and $\g_M$ has an abelianization $p:\g_M\to(\g_M)\ab$. Then $\overline{[\g_M,\g_M]}\subset\ker p$ and if we let $n$ be the rank of $\ker p$ we claim that:
\begin{itemize}
\item there exists an open $O\subset M$ such that $(\overline{[\g_M,\g_M]})|_O$ has constant rank $k<n$.
\end{itemize}
Indeed, since the rank of $\overline{[\g_M,\g_M]}_x$ changes upper semi-continuously with $x$, the set 
\[ \{x\in M:\rank_x\overline{[\g_M,\g_M]}=n\}=\{x\in M: \overline{[\g_M,\g_M]}_x=\ker p_x\} \]
is closed. So its complement is a non-empty, open subset $U\subset M$. On the other hand, the sets where rank $\overline{[\g_M,\g_M]}$ is constant $<n$ are finite and their union is $U$, so they cannot all have empty interior.
Let $\B\to O$ be a subbundle of $(\g_M)|_O$ complementary to $(\overline{[\g_M,\g_M]})|_O$, which we view as a trivial Lie algebra bundle. The projection $\phi:(\g_M)|_O\to \B$ is a morphism of Lie algebroids which clearly does not factor through $p:(\g_M)|_O\to(\g_M)\ab|_O$, contradicting the universal property of the abelianization.
\end{proof}

Before moving to more general cases, we prove the following technical but important fact.
\begin{lemma}
\label{lem:dense}
Let $\A$ be a Lie algebroid with $\g_M$ the family of its isotropy Lie algebras. If $\overline{[\g_M,\g_M]}$ is a subbundle, then there exists a dense, open, saturated subset $O\subset M$ such that $[\g_M,\g_M]|_O=\overline{[\g_M,\g_M]}|_O$.
\end{lemma}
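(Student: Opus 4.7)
The plan is to exploit the lower semi-continuity of the fiberwise rank of the bracket. Let $n$ denote the (constant) rank of the subbundle $\overline{[\g_M,\g_M]}$, and set $r(x) := \dim[\g_x,\g_x]$. Since $r(x)$ is the fiberwise rank of the smooth bundle map $[\cdot,\cdot] \colon \wedge^2 \g_M \to \g_M$, the set $\{x \in M : r(x) \geq k\}$ is open for every $k$. Because $[\g_x,\g_x] \subset \overline{[\g_M,\g_M]}_x$, we have $r(x) \leq n$ for all $x$, so
\[ O := \{x \in M : r(x) = n\} = \{x \in M : r(x) \geq n\} \]
is open. For $x \in O$, the inclusion $[\g_x,\g_x] \subset \overline{[\g_M,\g_M]}_x$ is an equality of $n$-dimensional subspaces, hence $[\g_M,\g_M]|_O = \overline{[\g_M,\g_M]}|_O$.

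To show $O$ is dense I argue by contradiction. Suppose its complement has nonempty interior $V$, so $r \leq n-1$ on $V$. Let $k \leq n-1$ be the largest integer for which $V_k := \{x \in V : r(x) \geq k\}$ is nonempty; then $V_k$ is open by lower semi-continuity and, since $V_{k+1} = \emptyset$, one has $r \equiv k$ on $V_k$. As the image of a bundle map of locally constant rank $k$, the set $[\g_M,\g_M]|_{V_k}$ is a smooth rank-$k$ vector subbundle of $\g_M|_{V_k}$, and in particular a closed subset of $\g_M|_{V_k}$. Using the point-set fact that closure in an open subspace coincides with the ambient closure intersected with that subspace, this yields
\[ \overline{[\g_M,\g_M]}|_{V_k} = [\g_M,\g_M]|_{V_k}, \]
a subbundle of rank $k < n$, contradicting the hypothesis that $\overline{[\g_M,\g_M]}$ has constant rank $n$.

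Saturation is automatic: a bundle of Lie algebras has vanishing anchor, so the induced singular foliation has points as leaves and every subset of $M$ is saturated. The most delicate step should be extracting the subbundle structure on $V_k$, which relies on the smooth rank theorem for bundle maps of locally constant rank; once constant rank is in hand, the closedness of the image and the resulting rank contradiction are routine.
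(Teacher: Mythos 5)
Your treatment of the openness and density of $O$ is correct and is essentially the paper's argument in a slightly different dress: the paper takes $O=S_n$ to be the top stratum of the rank function $x\mapsto\dim[\g_x,\g_x]$ and runs an induction on the complement, while you phrase the same step via lower semi-continuity of the rank of the tensorial bundle map $\wedge^2\g_M\to\g_M$ and the constant-rank theorem. Your version of the density step is, if anything, more explicit than the paper's about the key point, namely that on an open set where $[\g_M,\g_M]$ has locally constant rank it is a subbundle, hence closed, so the restriction of $\overline{[\g_M,\g_M]}$ there cannot have larger rank.

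The gap is in your last paragraph, on saturation. You declare saturation automatic because a bundle of Lie algebras has zero anchor, so its own induced foliation is by points. That reading makes the word ``saturated'' in the statement vacuous, and it is not the intended one: in the paper $\g_M$ is the isotropy bundle of an ambient Lie algebroid $\A$, and ``saturated'' means saturated with respect to the foliation $\F$ of $\A$. This is precisely what gets used later --- in the proof of Proposition \ref{prop:regular:transverse} the saturation of $O$ with respect to $T\F=\mathrm{im}(\rho_\A)$ is essential to conclude that $f^{-1}(O)$ is dense when $f$ is merely transverse to $\F$ rather than a submersion. The paper's proof of this point is the only genuinely non-formal part of the lemma: for $x,y$ in the same leaf of $\A$ one chooses a compactly supported section $s\in\Gamma(\A)$ whose anchor flow carries $x$ to $y$ in time $1$; the time-$1$ flow of $s$ is then a Lie algebroid automorphism sending $\g_x$ to $\g_y$ and hence $[\g_x,\g_x]$ to $[\g_y,\g_y]$, so the rank strata $S_k$, and therefore $O$, are unions of leaves. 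Your proof needs this argument (or some substitute for it) for the lemma to serve its purpose.
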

\begin{proof}
First assume that $\g_M$ is a bundle of Lie algebras.
Consider the sets
\[ S_k=\{x\in M\mid\dim([\g_M,\g_M]_x)=k\}. \]
and let $n=\max\{k\mid S_k\neq\emptyset\}$. Then since the rank of $[\g_M,\g_M]_x$ changes lower semi-continuously with $x$, we have that $O:=S_n$ is open in $M$ and we have \[ [\g_M,\g_M]|_{O}=\overline{[\g_M,\g_M]}|_{O}. \]
Also, since $\overline{[\g_M,\g_M]}$ is a subbundle, it follows that $n=\rank \overline{[\g_M,\g_M]}$. 
We claim that $\overline{O}=M$. Indeed, if this fails then we can repeat the argument replacing $M$ by $M'=M-\overline{O}$. We obtain sets $S'_k$, with $k<n$, and $n'=\max\{k\mid S'_k\neq\emptyset\}<n$ such that $n'=\rank \overline{[\g_M,\g_M]}$, contradicting that $\overline{[\g_M,\g_M]}$ is a subbundle.\\\\
Now consider a non-regular Lie algebroid, for which $\g_M$ is not a subbundle. Let $U_k:=\{x\in M: \exists \text{ open $V\ni x$ with } \rank(\rho_y)=k \text{ } \forall y\in V\}$ be the set of regular points of rank $k$. Note that the rank of the anchor cannot drop locally, i.e. every $x\in M$ has a neighborhood $U$ such that 
$\rank(\rho_x)\le \rank(\rho_y)$ for all $y\in U$. This implies that each $U_k$ is open. Then by the previous result we see that there exist $O_k\subset U_k$ such that 
\[ [\g_M,\g_M]|_{O_k}=\overline{[\g_M,\g_M]}^{U_k}|_{O_k}=\overline{[\g_M,\g_M]}|_{O_k},\]
where the first closure is within $\A|_{U_k}$. Since the rank takes only a finite number of values, we see that $O:=\cup_k O_k\subset M$ is the desired open dense set.\\
The fact that $O$ is saturated follows from the fact that the sets $S_k$ are saturated. To see this, let $x$ and $y$ belong to the same leaf of $\A$. One can choose a compactly supported section $s\in\Gamma(A)$ such that the time-1 flow of the vector field $\rho(s)$ satisfies
\[ \varphi^1_{\rho(s)}(x)=y.\]
Then the time-1 flow of the section $s$ (see, e.g., \cite{crainic2004integrability}) is a Lie algebroid automorphism which maps $\g_x$ to $\g_y$ and hence also $[\g_M,\g_M]_x$ to $[\g_M,\g_M]_y$. Hence, if $x\in S_k$ we must have $y\in S_k$, so $S_k$ is saturated.
\end{proof}
\begin{remark}\label{re:localreg}
    Given a Lie algebroid $\A\to M$, its regular points $M_\reg:=\cup_k U_k\subset M$ form a dense open subset.
\end{remark}
This allows us to prove one half of Theorem \ref{thm:main:1}. 
\begin{proposition}
    \label{prop:half:main:thm}
    Let $\A$ be a Lie algebroid with isotropy $\g_M$ and assume that $\overline{[\g_M, \g_M]} \subset \A$ is a vector subbundle. Then $\A$ has abelianization the quotient
    \[ p:\A\to \A/\overline{[\g_M, \g_M]}. \]
\end{proposition}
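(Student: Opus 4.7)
The plan is to set $I := \overline{[\g_M, \g_M]}$ and show two things: (a) $I$ is a Lie algebroid ideal of $\A$, so that $\A\ab := \A/I$ is a well-defined abelian Lie algebroid, and (b) the projection $p : \A \to \A\ab$ satisfies the universal property. The main obstacle is (a), specifically the bracket condition $[\Gamma(\A), \Gamma(I)] \subset \Gamma(I)$; the anchor descends trivially since $I \subset \g_M \subset \ker\rho$, and $I$ is a subbundle by hypothesis. For the bracket condition, my strategy is to use the flow of a section, already invoked in the proof of Lemma \ref{lem:dense}. For any $s \in \Gamma(\A)$, the time-$\epsilon$ flow $\Phi_\epsilon^s$ is a local Lie algebroid automorphism of $\A$ which, being fiberwise a Lie algebra isomorphism, sends $[\g_M,\g_M]$ to itself, and being a diffeomorphism of an open subset of $\A$, also preserves the closure $I$. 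Consequently, for any $\eta \in \Gamma(I)$ the pullbacks $(\Phi_\epsilon^s)^* \eta$ are again sections of $I$, and the algebroid Lie derivative
\[ [s,\eta] = \frac{\dd}{\dd\epsilon}\Big|_{\epsilon=0} (\Phi_\epsilon^s)^* \eta \]
lies in $\Gamma(I)$, because $I_x$ is a closed linear subspace of $\A_x$ for each $x$. The isotropy of the resulting quotient $\A\ab$ at $x$ equals $\g_x/I_x$, which is abelian since $[\g_x,\g_x] \subset I_x$.

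For the universal property, fix an open $U \subset M$, an abelian Lie algebroid $\B \to N$, and a morphism $\phi : \A|_U \to \B$ over $f : U \to N$. Uniqueness of a factorization is immediate from the fiberwise surjectivity of $p$. For existence, I use the standard consequence of the morphism axioms that $\phi$ restricts at each $x \in U$ to a Lie algebra homomorphism $\phi_x : \g_x \to \g^\B_{f(x)}$; since the target is abelian, $\phi_x$ kills $[\g_x,\g_x]$, so $\phi$ vanishes on $[\g_M,\g_M]|_U$ and, by continuity, on all of $I|_U$. This yields a set-theoretic factorization $\tilde\phi : \A\ab|_U \to \B$ with $\phi = \tilde\phi \circ p$, and $\tilde\phi$ is smooth because $I$ is a subbundle, so that $p$ admits local smooth sections. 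To see that $\tilde\phi$ is a Lie algebroid morphism, I use the de Rham characterization: from $\phi^* = p^*\tilde\phi^*$ together with $\phi^* \dd_\B = \dd_\A \phi^*$ and $p^* \dd_{\A\ab} = \dd_\A p^*$, one deduces $p^*(\tilde\phi^* \dd_\B - \dd_{\A\ab} \tilde\phi^*) = 0$, and the injectivity of $p^*$ on forms (since $p$ is fiberwise surjective) closes the argument.
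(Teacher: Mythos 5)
Your proof is correct, but the route to the key step --- that $I=\overline{[\g_M,\g_M]}$ is an ideal --- is genuinely different from the paper's. The paper first establishes (Lemma \ref{lem:dense}) a dense open saturated set $O$ on which $[\g_M,\g_M]$ already equals its closure, writes a section of $I$ over $O$ as a combination of brackets $[\xi_i,\xi_j]$ of isotropy sections, and applies the Jacobi identity to see that $[s,\xi]|_O\in[\g_M,\g_M]|_O$, concluding by density. You instead integrate: the flow $\Phi^s_\epsilon$ of a section is a local Lie algebroid automorphism, hence preserves $[\g_M,\g_M]$ fiberwise and, being a homeomorphism of (open subsets of) the total space, preserves the closure $I$; the Lie-derivative formula then places $[s,\eta]$ in $\Gamma(I)$ because each $I_x$ is a closed linear subspace (which is exactly where the subbundle hypothesis enters). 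This is the same mechanism the paper uses \emph{inside} the proof of Lemma \ref{lem:dense} (flows preserve the commutator bundle), repurposed to prove invariance directly; it makes your argument independent of the density lemma, at the cost of invoking the existence of flows of sections. For the universal property you also bypass Lemma \ref{lem:dense}: you note that $\ker\phi$ is closed and contains $[\g_M,\g_M]|_U$, hence contains $I|_U$ --- this is the same continuity argument the paper records in its corollary on $\overline{[\g_M,\g_M]}\subset\ker p$, and is if anything more direct than the paper's detour through $O\cap U$. Your verification that $\tilde\phi$ is smooth (local sections of $p$) and is a morphism (injectivity of $p^*$ on $\A$-forms combined with the de Rham characterization) fills in details the paper leaves implicit. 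Two small points worth making explicit if you write this up: the identity $\overline{S}\cap \A|_U=\overline{S\cap \A|_U}$ used when you restrict the closure to the open set $U$ (valid precisely because $U$ is open), and the fact that the flow is only defined locally, which is harmless since the ideal condition is local.
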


\begin{proof}
    We claim that $\overline{[\g_M, \g_M]}$  is an ideal in $\A$, i.e., that
    \[ s\in\Gamma(\A),\ \xi\in \Gamma(\overline{[\g_M, \g_M]})\quad \Longrightarrow \quad [s,\xi]\in \Gamma(\overline{[\g_M, \g_M]}). \]
    To see this, take $\xi\in \Gamma(\overline{[\g_M, \g_M]})$, then for each $O_k$ as in Lemma \ref{lem:dense}, there are sections $\xi_i\in\Gamma(\g_M|_{O_k})$ and real numbers $a_{ij}$ such that
    \[ \xi|_{O_k}=\sum_{i<j}a_{ij}[\xi_i,\xi_j]. \]
    It follows that if $s\in\Gamma(\A)$, one has
    \[ [s,\xi]|_{O_k}=\sum_{i<j}a_{ij}\left([[s,\xi_i],\xi_j]+[\xi_i,[s,\xi_j]]\right)\in [\g_M,\g_M]|_{O_k}. \]
    Note that $\cup_kU_k$ is dense and open in $M$, and so is $\cup_kO_k$.
    Therefore, we must have $[s,\xi]\in \Gamma(\overline{[\g_M, \g_M]})$, and the claim follows.

    Since $\overline{[\g_M, \g_M]}$  is an ideal in $\A$, it follows that there is a unique Lie algebroid structure such that quotient map
    \[ p:\A\to \A/\overline{[\g_M, \g_M]}\]
    is a morphism of Lie algebroids. Moreover, this quotient has abelian isotropy $\g_M/\overline{[\g_M, \g_M]}$, so we only need to check that the universal property holds.

    Let $U\subset M$ be an open subset, $\mathcal{B}\rightarrow N$ an abelian algebroid and $\phi:\mathcal{A}|_U\rightarrow \mathcal{B}$ an algebroid morphism. Applying Lemma \ref{lem:dense} again, $U\cap O$ is an open dense subset of $U$ where  
    \[ [\g_M,\g_M]|_{O\cap U}=\overline{[\g_M,\g_M]}|_{O\cap U}. \]
    It follows that $\overline{[\g_M,\g_M]}|_U\subset\ker\phi$, so there is a unique morphism $\tilde{\phi}$ such that the following diagram commutes
	\[\begin{tikzcd}[ampersand replacement=\&]
			\mathcal{A}|_U \arrow[r, "\phi"]\arrow[d,"p"] \& \mathcal{B}. \\
			\mathcal{A}^{\ab}|_U\arrow[dashed]{ur}[swap]{\exists ! \tilde{\phi}}
    \end{tikzcd}\]
    so the universal property holds.
\end{proof}
\subsection{Regular Lie algebroids}
We now consider the existence of an abelianization for an arbitrary regular Lie algebroid. Note that a Lie algebroid $\A\to M$ is regular if and only if its isotropy $\g_M\subset \A$ is a subbundle. Therefore, in this case, if $\{s_1,\dots,s_n\}$ is a basis of local sections of $\g_M$ over an open set $U$, one has
\[ [s_i,s_j](x)=[s_i(x),s_j(x)],\quad \forall x\in U. \]
It follows also that $[\g_M,\g_M]_x$ is generated by $[s_i,s_j](x)$.
\subsubsection{Decomposition of regular Lie algebroids} 
In the sequel we will use the fact that one can recover a regular Lie algebroid from its foliation, isotropy bundle and certain extra data concerning the relation between them. We recall briefly how this works and refer to, e.g., to \cite{MK87} for details.

Let $\A$ be a regular Lie algebroid. A choice of a splitting of the short exact sequence defined by the anchor
	\[\begin{tikzcd}[ampersand replacement=\&]
			0 \arrow[r] \& \g_M \arrow[r] \& \A \arrow[r, "\rho"] \& T\F \arrow[r] \arrow[l,dashrightarrow, bend left,"\sigma"] \& 0
    \end{tikzcd}\]
allows to identify $\A$ with $T\F\oplus\g_M$ so that the anchor becomes the projection on $T\F$. On the other hand, the Lie bracket becomes
\begin{equation}
    \label{eq:Lie:bracket:regular}
    [(X,\xi),(Y,\eta)]_\A=([X,Y],[\xi,\eta]_{\g_M}+\nabla_X \eta-\nabla_Y \xi+\Omega(X,Y)), 
\end{equation}
where
\begin{itemize}
	\item $\nabla$ is the $T\F$-connection on $\g_M$ defined by
    \[ \nabla_X \xi:=[\sigma(X),\xi]_{\A}; \]
	\item $\Omega\in\Omega^2(T\F,\g_M)$ is the curvature form of the splitting given by
    \[ \Omega(X,Y):=[\sigma(X),\sigma(Y)]_\A - \sigma([X,Y]_{\A}).\] 
\end{itemize}
It is easy to check that under this isomorphism the Jacobi identity for $[\cdot,\cdot]_\A$ amounts to the following set of identities:
\begin{align}
	& \nabla_X[\xi,\eta]_{\g_M}=[\nabla_X\xi,\eta]_{\g_M}+[\xi,\nabla_X\eta]_{\g_M},\notag \\
	& [\Omega(X,Y),\xi]_{\g_M}=\nabla_X\nabla_Y\xi-\nabla_Y\nabla_X\xi-\nabla_{[X,Y]}\xi, \label{eq:Jacobi:regular}\\
    & \bigodot_{X,Y,Z}\Big(\Omega([X,Y],Z)+\nabla_X(\Omega(Y,Z))\Big)=0, \notag\
\end{align}
where $\xi,\eta\in\Gamma(\g_M)$, $X,Y,Z\in\Gamma(T\F)$, and the symbol $\odot$ denotes cyclic summation.

The converse also holds. Given a foliation $\F$ of $M$, a bundle of Lie algebras $\g_M\to M$, a $T\F$-connection $\nabla$ and a $\g_M$-valued 2-form $\Omega$ satisfying identities \eqref{eq:Jacobi:regular}, then one obtains a Lie algebroid structure on $T\F\oplus\g_M$ with Lie bracket \eqref{eq:Lie:bracket:regular} and anchor $\rho=\pr_{T\F}$. We denote this Lie algebroid by $T\F\ltimes\g_M$. The previous discussion shows that one has the following simple proposition.

\begin{proposition}
    Any regular Lie algebroid is isomorphic to $T\F\ltimes\g_M$ for some quadruple $(\g_M, T\F, \nabla, \Omega)$ satisfying \eqref{eq:Jacobi:regular}. 
\end{proposition}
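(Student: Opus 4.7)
The plan is to extract the proof essentially from the preceding discussion: it only requires checking that the splitting used to build the data $(\g_M,T\F,\nabla,\Omega)$ is available globally and that the two constructions (regular Lie algebroid $\leadsto$ quadruple and quadruple $\leadsto$ regular Lie algebroid) are mutually inverse.

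First, I would produce the splitting. Since $\A$ is regular, $\g_M\subset\A$ is a vector subbundle and $T\F=\rho(\A)\subset TM$ is also a vector subbundle, so we have a short exact sequence of vector bundles over $M$. Any such sequence splits by a standard partition of unity argument (choose a fiberwise inner product on $\A$ and take $\sigma$ to land in the orthogonal complement of $\g_M$). Fix one such splitting $\sigma:T\F\to\A$, which induces a vector bundle isomorphism $\A\cong T\F\oplus\g_M$.

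Next, I would define $\nabla$ and $\Omega$ by the formulas in the excerpt,
\[ \nabla_X\xi:=[\sigma(X),\xi]_\A,\qquad \Omega(X,Y):=\sigma([X,Y])-[\sigma(X),\sigma(Y)]_\A, \]
and verify they are well defined: the Leibniz rule for $[\cdot,\cdot]_\A$ shows that $\nabla$ is $C^\infty(M)$-linear in $X$ and satisfies the derivation rule in $\xi$, so $\nabla$ is a genuine $T\F$-connection on $\g_M$; a direct computation using the Leibniz rule twice shows that $\Omega(fX,Y)=f\,\Omega(X,Y)$, so $\Omega\in\Omega^2(T\F,\g_M)$ (its image lies in $\g_M$ because $\rho(\Omega(X,Y))=[X,Y]-[X,Y]=0$). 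Plugging $(X,\xi),(Y,\eta)\in\Gamma(T\F\oplus\g_M)$ into $[\sigma(X)+\xi,\sigma(Y)+\eta]_\A$ and expanding using the Leibniz rule recovers precisely the bracket formula \eqref{eq:Lie:bracket:regular}. Thus $\A$ is isomorphic to the Lie algebroid built on $T\F\oplus\g_M$ with this bracket and with anchor $\rho=\pr_{T\F}$.

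Finally, I would show that the triple of identities \eqref{eq:Jacobi:regular} is forced. This is the step that requires the most bookkeeping, but it is mechanical: write out the Jacobi identity
\[ \bigodot_{u,v,w}[u,[v,w]_\A]_\A=0 \]
for three sections of $T\F\oplus\g_M$ and decompose each term into its $T\F$- and $\g_M$-components using \eqref{eq:Lie:bracket:regular}. The $T\F$-component reduces to the Jacobi identity for vector fields on $M$, which is automatic. Specializing $(u,v,w)$ to $(\xi,\eta,\zeta)\in\Gamma(\g_M)$ yields the invariance of $\nabla$ on brackets; specializing to $(X,Y,\xi)$ with $X,Y\in\Gamma(T\F)$ yields the curvature identity $[\Omega(X,Y),\xi]_{\g_M}=\nabla_X\nabla_Y\xi-\nabla_Y\nabla_X\xi-\nabla_{[X,Y]}\xi$; and specializing to $(X,Y,Z)\in\Gamma(T\F)$ yields the Bianchi-type identity for $\Omega$. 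The converse direction, that any quadruple satisfying \eqref{eq:Jacobi:regular} defines a Lie algebroid via \eqref{eq:Lie:bracket:regular}, is the same calculation run backwards and gives the isomorphism $\A\cong T\F\ltimes\g_M$.

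The main obstacle is purely organizational: keeping track of which term in the expanded Jacobiator produces which of the three identities. I would probably just state that the verification is a direct expansion and refer to \cite{MK87} (already cited in the excerpt) for the full componentwise calculation.
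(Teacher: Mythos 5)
Your proposal is correct and follows essentially the same route as the paper, which gives no separate proof but derives the proposition directly from the preceding discussion: split the anchor sequence, read off $\nabla$ and $\Omega$ from the bracket, and observe that the Jacobi identity decomposes into the three identities \eqref{eq:Jacobi:regular}, with the details deferred to \cite{MK87}. Your additional checks (existence of the splitting via a fiberwise metric, tensoriality of $\Omega$, and that $\Omega$ takes values in $\g_M$) are exactly the routine verifications the paper leaves implicit.
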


We will also use the following notations. Given a collections of subspaces
\[ E=\bigcup_{x\in M} E_x\subset\g_M, \]
which is not necessarily a subbundle (e.g., $E=[\g_M,\g_M]$), we still denote the subspace of sections which take values in $E$ by
\[
\Gamma(E):=\{s\in\Gamma(\g_M): s(x)\in E_x \textrm{ for all }x\in M\}. 
\] 
Also, given a $T\F$-connection $\nabla$ on $\g_M$ we will say that $\nabla$ preserves $E$ if for every $X\in\Gamma(T\F)$ one has
\[ 
s\in\Gamma(E)\quad\Longrightarrow\quad \nabla_X s\in\Gamma(E).
\]
The following result will be useful in the sequel. 
\begin{lemma}
    \label{lem:connection:commutator}
	Given a bundle of Lie algebras $\g_M$ and a foliation $\F$ on M, any $T\F$-connection $\nabla$ on $\g_M$ satisfying \eqref{eq:Jacobi:regular}, preserves $[\g_M,\g_M]$. Moreover, $\nabla$ also preserves $\overline{[\g_M,\g_M]}$ provided the latter is a subbundle.
\end{lemma}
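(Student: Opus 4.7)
My plan is to argue pointwise by means of parallel transport along integral curves of $X$, using only the first identity of \eqref{eq:Jacobi:regular}, and then to pass from $[\g_M,\g_M]$ to its closure via Lemma~\ref{lem:dense}.

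Fix $y\in M$ and $X\in\Gamma(T\F)$, and let $\gamma$ be the integral curve of $X$ through $y$. Since $\gamma$ lies in a single leaf of $\F$, the $\nabla$-parallel transport $P_t\colon\g_y\to\g_{\gamma(t)}$ is well defined. The key step is to show that $P_t$ is a Lie algebra isomorphism: given $v,w\in\g_y$, extend them to local sections $\xi,\eta$ of $\g_M$ that are parallel along $\gamma$; the identity
\[
\nabla_X[\xi,\eta]_{\g_M}=[\nabla_X\xi,\eta]_{\g_M}+[\xi,\nabla_X\eta]_{\g_M}
\]
then vanishes along $\gamma$, so $[\xi,\eta]_{\g_M}$ is parallel too and therefore equals both $P_t[v,w]$ and $[P_tv,P_tw]$. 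In particular, $P_t$ maps $[\g_y,\g_y]$ isomorphically onto $[\g_{\gamma(t)},\g_{\gamma(t)}]$. Using the standard formula
\[
\nabla_X s(y)=\frac{\dd}{\dd t}\bigg|_{t=0}P_t^{-1}\bigl(s(\gamma(t))\bigr),
\]
the hypothesis $s(\gamma(t))\in[\g_{\gamma(t)},\g_{\gamma(t)}]$ implies that the curve $t\mapsto P_t^{-1}(s(\gamma(t)))$ stays entirely inside the finite-dimensional (hence closed) subspace $[\g_y,\g_y]\subset\g_y$, and so its derivative at $t=0$ lies there as well, proving the first statement.

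For the moreover part, I would apply Lemma~\ref{lem:dense} to obtain a dense open saturated $O\subset M$ on which $[\g_M,\g_M]$ equals $\overline{[\g_M,\g_M]}$. Given $s\in\Gamma(\overline{[\g_M,\g_M]})$, the restriction $s|_O$ is a section of $[\g_M,\g_M]|_O$, so the first part yields $\nabla_X s(y)\in\overline{[\g_M,\g_M]}_y$ for every $y\in O$; the smoothness of $\nabla_X s$ combined with the closedness of the subbundle $\overline{[\g_M,\g_M]}$ in $\g_M$ then propagates this inclusion to all of $\overline{O}=M$. The only real obstacle I anticipate is that one cannot, in general, express a section of $[\g_M,\g_M]$ as a finite $C^\infty$-linear combination $\sum f_i[\xi_i,\eta_i]$ of commutators---Hadamard-type divisibility issues can prevent this---which is precisely why the argument must pass through parallel transport rather than a direct algebraic decomposition via the Leibniz identity.
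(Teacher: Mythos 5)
Your proof is correct, and it takes a genuinely different route from the paper's. The paper argues algebraically: it fixes a local trivializing frame $\{s_i\}$ of $\g_M$ and applies the derivation identity (the first line of \eqref{eq:Jacobi:regular}) together with the Leibniz rule to sections of the form $\sum f_{ij}[s_i,s_j]$, observing that every term of $\nabla_X[fs_i,gs_j]$ again takes values in $[\g_M,\g_M]$; the ``moreover'' part is then dispatched by citing Lemma \ref{lem:dense}. You instead show that the same identity forces parallel transport $P_t\colon\g_y\to\g_{\gamma(t)}$ to be a Lie algebra isomorphism, hence to carry $[\g_y,\g_y]$ onto $[\g_{\gamma(t)},\g_{\gamma(t)}]$, and then you differentiate a curve confined to the closed subspace $[\g_y,\g_y]$. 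The payoff is exactly the point you flag at the end: with the paper's pointwise definition of $\Gamma(E)$, a section of $[\g_M,\g_M]$ need not be a finite $C^\infty$-combination of brackets of frame sections when the rank jumps, so the frame computation by itself only covers such combinations, whereas your argument covers every pointwise section and is therefore more complete for the first assertion. (This extra generality is not needed where the lemma is invoked --- only the subbundle $\overline{[\g_M,\g_M]}$ matters there, and on the dense saturated set of Lemma \ref{lem:dense} one can extract a local frame from among the brackets $[s_i,s_j]$ and conclude by continuity, which is what the paper's one-line remark amounts to --- but it is a legitimate refinement.) Two details worth making explicit in your write-up: the step $[\xi,\eta](\gamma(t))=[P_tv,P_tw]$ uses that the bracket of a bundle of Lie algebras is $C^\infty$-bilinear, hence pointwise, because the anchor vanishes on $\g_M$; and at a point where $X(y)=0$ the integral curve degenerates, but then $\nabla_Xs(y)=0$ by tensoriality of the connection in $X$, so the conclusion is trivial there.
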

\begin{proof}
	The first part is immediate from the first item in criterion (\ref{eq:Jacobi:regular}). The second part follows immediately from Lemma \ref{lem:dense}.
\end{proof}

\subsubsection{Morphisms between regular Lie algebroids}
Consider a surjective morphism of regular Lie algebroids $\phi:\A_1\to\A_2$ covering the identity. Then they share the same foliation $\F$ and we have a commutative diagram with exact rows
  \[\begin{tikzcd}
		0 & \g_M^1 & \A_1 & T\F & 0 \\
		0 & \g_M^2 & \A_2 & T\F &0
		\arrow[from=1-1, to=1-2]
		\arrow[from=1-2, to=1-3]
		\arrow["\rho_1", from=1-3, to=1-4]
		\arrow[from=1-4, to=1-5]
		\arrow[from=2-1, to=2-2]
		\arrow[from=2-2, to=2-3]
		\arrow["\rho_2", from=2-3, to=2-4]
		\arrow[from=2-4, to=2-5]
		\arrow["\phi|_{\g_M^1}",from=1-2, to=2-2]
		\arrow["\phi",from=1-3, to=2-3]
		\arrow[equal,"\id",from=1-4, to=2-4]
        \arrow[dashrightarrow, bend left,"\sigma_1" near start,from=1-4, to=1-3]
        \arrow[dashrightarrow, bend left,"\sigma_2" near start,from=2-4, to=2-3]
  \end{tikzcd}\]
If we choose choose a splitting $\sigma_1:T\F\to \A_1$ of the anchor of $\A_1$, we obtain a splitting $\sigma_2:=\phi\circ\sigma_1:T\F\to \A_2$ of the anchor of $\A_2$. These splittings give identifications
\[ \A_i\simeq T\F\ltimes\g_M^i. \]
We conclude that to specify the morphism $\phi:\A_1\to \A_2$ amounts to specifying
the following data:
	\begin{itemize}
		\item $T\F$-connections $\nabla^i$ on $\g_M^i$ and 2-forms $\Omega^i\in\Omega^2(M,\g_M^i)$ satisfying \eqref{eq:Jacobi:regular}; 
        \item a morphism of Lie algebra bundles $\phi:\g_M^1\rightarrow\g_M^2$ covering the identity compatible with connections $\nabla^i$ and forms $\Omega^i$, i.e., satisfying
	\begin{equation}
		    \label{eq:compatible:morphism}
            \phi\circ\nabla^1=\nabla^2\circ \phi,\quad
			\phi\circ\Omega^1=\Omega^2.
	\end{equation}
    \end{itemize}

In particular, note that the compatibility condition \eqref{eq:compatible:morphism} implies that $\nabla^1$ preserves the kernel of $\phi$ (which may fail to be a subbundle):
\[ s\in\Gamma(\ker(\phi))\quad\Longrightarrow\quad \nabla^1_X s\in\Gamma(\ker(\phi)). \]

Notice also that the connection $\nabla^2$ and the 2-form $\Omega^2$ are completely determined by $\nabla^1$ and $\Omega^1$.This can be stated as follows.

\begin{proposition}
    \label{prop:extend}
    Let $\phi:\g_M^1\to\g_M^2$ be a surjective morphism of Lie algebra bundles and let $\nabla^1$ and $\Omega^1$ be a $T\F$-connection and 2-form on $\g_M^1$ satisfying \eqref{eq:compatible:morphism}. If $\nabla^1$ preserves $\ker\phi$ then:
    \begin{enumerate}[(i)]
        \item there exist a unique $T\F$-connection $\nabla^2$ and a unique 2-form $\Omega^2$ on $\g_M^2$ satisfying \eqref{eq:compatible:morphism}, and
        \item  $\phi$ extends to a surjective Lie algebroid morphism
    \[ (\id,\phi):T\F\ltimes\g_M^1\to T\F\ltimes\g_M^2. \]
    \end{enumerate}
\end{proposition}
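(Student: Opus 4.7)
The plan is to define $\nabla^2$ and $\Omega^2$ by pushing them forward along $\phi$, verify that the resulting pair satisfies the identities \eqref{eq:Jacobi:regular}, and then check that $(\id,\phi)$ is a Lie algebroid morphism via the explicit bracket formula \eqref{eq:Lie:bracket:regular}.

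I set $\Omega^2:=\phi\circ\Omega^1$, which is forced by the second half of \eqref{eq:compatible:morphism}. For the connection, since $\phi$ is pointwise surjective it has constant rank, so $\ker\phi$ is a smooth subbundle of $\g_M^1$, and every local section $\bar s$ of $\g_M^2$ admits a smooth local lift $s$ with $\phi(s)=\bar s$. I then define $\nabla^2_X\bar s:=\phi(\nabla^1_X s)$. This is independent of the lift: two lifts differ by a section $k\in\Gamma(\ker\phi)$, and by hypothesis $\nabla^1_X k\in\Gamma(\ker\phi)$, so the discrepancy is killed by $\phi$. The Leibniz rule for $\nabla^2$ is inherited from that of $\nabla^1$, the local formulas glue to a global $T\F$-connection, and uniqueness follows at once, since the first half of \eqref{eq:compatible:morphism} together with the pointwise surjectivity of $\phi$ on sections leave no freedom.

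To finish (i), I verify that $(\nabla^2,\Omega^2)$ satisfies the three identities of \eqref{eq:Jacobi:regular}. Each one is obtained by applying $\phi$ to the corresponding identity for $(\nabla^1,\Omega^1)$ evaluated on lifted sections, using both halves of \eqref{eq:compatible:morphism} and the fact that $\phi$ is a morphism of Lie algebra bundles; pointwise surjectivity of $\phi$ then extends the identity to arbitrary sections of $\g_M^2$. For part (ii), the bracket formula \eqref{eq:Lie:bracket:regular} for $T\F\ltimes\g_M^i$ combined with \eqref{eq:compatible:morphism} shows directly that $(\id,\phi)$ intertwines the brackets, and surjectivity is exactly the observation recorded just before the proposition statement.

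The only nontrivial step is the well-definedness of $\nabla^2$; the hypothesis that $\nabla^1$ preserves $\ker\phi$ is tailored precisely for this, and once the connection is in hand the remainder of the argument is a routine push-forward verification of the tensorial identities \eqref{eq:Jacobi:regular} and \eqref{eq:Lie:bracket:regular}.
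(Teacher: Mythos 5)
Your proposal is correct and follows essentially the same route as the paper: both define $\Omega^2=\phi\circ\Omega^1$ and push the connection forward through $\phi$ via lifts (the paper uses a single global splitting $\sigma$ of $\phi$, you use arbitrary local lifts and check independence), with the hypothesis that $\nabla^1$ preserves $\ker\phi$ doing exactly the work of making $\nabla^2$ well defined and \eqref{eq:compatible:morphism} hold. The remaining verifications of \eqref{eq:Jacobi:regular} and of the morphism property via \eqref{eq:Lie:bracket:regular} are carried out the same way in both arguments.
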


\begin{proof}
    Let $\sigma:\g_M^2\to \g_M^1$ be a splitting of $\phi:\g_M^1\to\g_M^2$. Then the expression
    \[ \nabla^2_X\xi:=\phi(\nabla_X^1(\sigma\circ\xi)), \]
    defines a $T\F$-connection on $\g_M^2$. Since $\nabla^1$ preserves $\ker\phi$, if one sets $\Omega^2:=\phi\circ\Omega^1$, one checks easily that the pair $(\nabla^2,\Omega^2)$ satisfies \eqref{eq:Jacobi:regular} and that \eqref{eq:compatible:morphism} holds. Both items should now be obvious.
\end{proof}

This can also be restated as follows.

\begin{corollary}
\label{co:conn}
    Let $\phi:\A_1\to\A_2$ be a surjective morphism, covering the identity, between regular Lie algebroids. A choice of splitting of the anchor of $\A_1$ determines isomorphisms $\A_i\simeq T\F\ltimes\g_M^i$, such that $\phi$ becomes
    \[ \phi=(\id,\phi|_\g): T\F\ltimes\g_M^1\to  T\F\ltimes\g_M^2. \]
\end{corollary}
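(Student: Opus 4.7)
The plan is to deduce the corollary directly from Proposition \ref{prop:extend} by transporting the given splitting through $\phi$. First I would observe that, since $\phi$ covers the identity, the relation $\rho_2\circ\phi=\rho_1$ holds; surjectivity of $\phi$ then forces $\I(\rho_1)=\I(\rho_2)$, so both algebroids induce the same foliation $\F$. Moreover, $\phi$ restricts to a surjective morphism of Lie algebra bundles $\phi|_\g:\g_M^1\to\g_M^2$, since any $\xi\in\g_M^2$ lifts to some $a\in\A_1$ with $\rho_1(a)=\rho_2(\xi)=0$, whence $a\in\g_M^1$.

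Given the splitting $\sigma_1:T\F\to\A_1$ of $\rho_1$ fixed in the statement, I would then set $\sigma_2:=\phi\circ\sigma_1$. The identity $\rho_2\circ\sigma_2=\rho_1\circ\sigma_1=\id$ shows that $\sigma_2$ splits $\rho_2$. Each $\sigma_i$ produces the associated pair $(\nabla^i,\Omega^i)$ via $\nabla^i_X\xi=[\sigma_i(X),\xi]_{\A_i}$ and $\Omega^i(X,Y)=\sigma_i([X,Y])-[\sigma_i(X),\sigma_i(Y)]_{\A_i}$, and hence the isomorphisms $\A_i\simeq T\F\ltimes\g_M^i$.

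The key verification is that $\phi|_\g$ together with the pairs $(\nabla^i,\Omega^i)$ satisfies the compatibility conditions \eqref{eq:compatible:morphism}. Using that $\phi$ is a morphism of Lie algebroids together with the definition $\sigma_2=\phi\circ\sigma_1$, one has
\[ \phi|_\g(\nabla^1_X\xi)=\phi[\sigma_1(X),\xi]_{\A_1}=[\sigma_2(X),\phi|_\g(\xi)]_{\A_2}=\nabla^2_X\phi|_\g(\xi), \]
and similarly $\phi|_\g\circ\Omega^1=\Omega^2$ by expanding the defining formula for $\Omega^i$. This is really just unwinding the definitions, so I do not foresee any genuine obstacle here; the whole point of the corollary is that the choice $\sigma_2=\phi\circ\sigma_1$ is the natural way to transport the splitting across $\phi$. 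Proposition \ref{prop:extend} then produces the extension $(\id,\phi|_\g):T\F\ltimes\g_M^1\to T\F\ltimes\g_M^2$, and by construction this extension coincides with $\phi$ under the identifications $\A_i\simeq T\F\ltimes\g_M^i$: on an element $\sigma_1(X)+\xi\in\A_1$ one has $\phi(\sigma_1(X)+\xi)=\sigma_2(X)+\phi|_\g(\xi)$, which under the splitting for $\A_2$ is exactly $(X,\phi|_\g(\xi))$.
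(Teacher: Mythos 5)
Your proposal is correct and follows essentially the same route as the paper: the paper's argument is precisely the discussion preceding the corollary, where the splitting $\sigma_2:=\phi\circ\sigma_1$ is transported across $\phi$ and the compatibility conditions \eqref{eq:compatible:morphism} are checked to identify $\phi$ with $(\id,\phi|_\g)$. Your explicit verification of $\phi|_\g\circ\nabla^1=\nabla^2\circ\phi|_\g$ and $\phi|_\g\circ\Omega^1=\Omega^2$ just fills in the details the paper leaves implicit; the appeal to Proposition \ref{prop:extend} at the end is harmless but not actually needed once those identities are checked directly.
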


\subsubsection{Abelianization of regular Lie algebroids}
We are now ready to look into the abelianization of any regular Lie algebroid. 

\begin{proposition}
    \label{prop:regular:abelianization}
	Let $\A$ be a regular Lie algebroid with isotropy $\g_M$. Then $\A$ has an abelianization if and only if 
	$\overline{[\g_M,\g_M]}\subset \A$ is a vector subbundle. Moreover, if $\A\simeq T\F\ltimes \g_M$ for a quadruple $(\g_M,T\F,\nabla, \Omega)$ then $\nabla$ and $\Omega$ induce a $T\F$-connection and a 2-form on 
    \[ \g^{\ab}_M:=\g_M/\overline{[\g_M,\g_M]}\] and
    \[ \A\ab\simeq T\F\ltimes \g^{\ab}_M. \]
\end{proposition}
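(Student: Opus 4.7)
The plan is to bootstrap from the earlier results: Proposition \ref{prop:half:main:thm} already produces an abelianization as the quotient $\A/\overline{[\g_M,\g_M]}$ whenever $\overline{[\g_M,\g_M]}$ is a subbundle, and Proposition \ref{prop:extend} is tailor-made for transferring this quotient through the regular decomposition. The argument then splits into the two implications, together with the identification of $\A\ab$ with $T\F\ltimes\g^\ab_M$.

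For the ``if'' direction, I would fix a decomposition $\A\simeq T\F\ltimes\g_M$ coming from a choice of splitting. By Lemma \ref{lem:connection:commutator}, $\nabla$ preserves $\overline{[\g_M,\g_M]}$, so Proposition \ref{prop:extend} applied to the surjective bundle-of-Lie-algebras morphism $\pi:\g_M\twoheadrightarrow\g^\ab_M$ produces induced data $(\nabla\ab,\Omega\ab)$ on $\g^\ab_M$ (defining a regular Lie algebroid $T\F\ltimes\g^\ab_M$, abelian because $[\g^\ab_M,\g^\ab_M]=0$) and a surjective Lie algebroid morphism $(\id,\pi):T\F\ltimes\g_M\to T\F\ltimes\g^\ab_M$ whose kernel is precisely $\overline{[\g_M,\g_M]}$. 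Combined with Proposition \ref{prop:half:main:thm}, this identifies $\A\ab\simeq\A/\overline{[\g_M,\g_M]}\simeq T\F\ltimes\g^\ab_M$.

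For the ``only if'' direction, assume $\A\ab$ exists with quotient $p:\A\to\A\ab$. Since $\A$ and $\A\ab$ share the same anchor image, $\A\ab$ is again regular, hence $\ker p\subset\g_M$ is a vector subbundle of some rank $n$. Suppose toward a contradiction that $\overline{[\g_M,\g_M]}$ fails to be a subbundle. Mimicking the stratification argument in the proof of Proposition \ref{prop:Lie:algebra:bundle}---using upper semicontinuity of $x\mapsto\dim\overline{[\g_M,\g_M]}_x$ together with the inclusion $\overline{[\g_M,\g_M]}\subset\ker p$---one locates a nonempty open $O\subset M$ on which $\overline{[\g_M,\g_M]}|_O$ is a subbundle of some rank $k<n$. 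On $O$, Lemma \ref{lem:connection:commutator} ensures that $\nabla$ preserves $\overline{[\g_M,\g_M]}|_O$, so Proposition \ref{prop:extend} yields an abelian Lie algebroid
\[ \B:=T\F|_O\ltimes\bigl(\g_M|_O/\overline{[\g_M,\g_M]}|_O\bigr) \]
together with a surjective Lie algebroid morphism $\phi:\A|_O\to\B$ whose kernel is exactly $\overline{[\g_M,\g_M]}|_O$. The universal property of the abelianization, applied on $O$, supplies $\tilde\phi:\A\ab|_O\to\B$ factoring $\phi=\tilde\phi\circ p|_O$; this forces $\ker p|_O\subset\ker\phi=\overline{[\g_M,\g_M]}|_O$, contradicting $\rank(\ker p|_O)=n>k$.

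The main anticipated obstacle is the selection of the open set $O$ on which $\overline{[\g_M,\g_M]}$ is a genuine subbundle of strictly smaller rank than $\ker p$: upper semicontinuity of the fibrewise rank yields only a coarse stratification, and extracting an open stratum where the closure itself forms a subbundle relies on the same finite-stratification argument that underlies Proposition \ref{prop:Lie:algebra:bundle}. Once $O$ is in hand, the rest is a formal combination of Propositions \ref{prop:half:main:thm} and \ref{prop:extend}, so no further ideas are needed.
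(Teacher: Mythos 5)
Your proof is correct and follows essentially the same route as the paper: the forward direction combines Proposition \ref{prop:half:main:thm}, Lemma \ref{lem:connection:commutator} and Proposition \ref{prop:extend} exactly as the paper does, and the converse derives the same rank contradiction from the universal property on an open set where the commutator has constant rank strictly below $\rank(\ker p)$. The only cosmetic difference is that you quotient $\A|_O$ by $\overline{[\g_M,\g_M]}|_O$ while the paper uses $[\g_M,\g_M]|_O$; on the stratum produced by the Baire/stratification argument of Proposition \ref{prop:Lie:algebra:bundle} the commutator is a subbundle and hence coincides with its closure, so the two choices agree.
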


\begin{proof}
Assume first that $\overline{[\g_M,\g_M]}\subset \A$ is a vector subbundle. From Proposition \ref{prop:half:main:thm} we already know that $\A$
has abelianization
\[ \A\ab=\A/\overline{[\g_M,\g_M]}. \]
If we assume that $\A\simeq T\F\ltimes \g_M$ for a quadruple $(\g_M,T\F,\nabla, \Omega)$, it follows from Lemma \ref{lem:connection:commutator} that $\nabla$ preserves $\overline{[\g_M,\g_M]}$. By Proposition \ref{prop:extend}, $\nabla$ and $\Omega$ induce a $T\F$-connection and a 2-form on $\g^{\ab}_M:=\g_M/\overline{[\g_M,\g_M]}$ for which we have
\[ \A\ab\simeq T\F\ltimes \g^{\ab}_M. \]

Conversely, suppose $\A$ has abelianization $p:\A\to\A^{\ab}$. We claim that $\g_M$ also admits an abelianization, so by Proposition  \ref{prop:Lie:algebra:bundle} we conclude that $\overline{[\g_M,\g_M]}\subset \A$ is a vector subbundle.

It remains to prove the claim. For that, observe that we have a commutative diagram with exact rows
\[\begin{tikzcd}
		0 & \g_M & \A & T\F & 0 \\
		0 & \g_M^{\ab} & \A^{\ab} & T\F &0
		\arrow[from=1-1, to=1-2]
		\arrow[from=1-2, to=1-3]
		\arrow["\rho", from=1-3, to=1-4]
		\arrow[from=1-4, to=1-5]
		\arrow[from=2-1, to=2-2]
		\arrow[from=2-2, to=2-3]
		\arrow["\rho^{\ab}", from=2-3, to=2-4]
		\arrow[from=2-4, to=2-5]
		\arrow["p_{\g}",from=1-2, to=2-2]
		\arrow["p",from=1-3, to=2-3]
		\arrow["\id",from=1-4, to=2-4]
		,
\end{tikzcd}\]
where $\g\ab_M$ is the isotropy bundle of $\A\ab$ and $p_{\g}=p|_{\g_M}$. If $\g_M$ does not admit an abelianization, then by Proposition \ref{prop:Lie:algebra:bundle}, we know that $\overline{[\g_M,\g_M]}$ does not have a constant rank. As we saw in the proof of Lemma \ref{lem:dense}, we can find an open $O\subset M$ such that 
\[ 
\rank([\g_M,\g_M]|_O)=k<n=\max_{x\in M}\left(\rank\overline{[\g_M,\g_M]}_x\right).
\]
Now consider the Lie algebroid $\A|_O$ and the quotient map
\[ \phi:\A|_O\to \A|_O/[\g_M,\g_M]|_O,\]
where the latter is an abelian Lie algebroid. Since $\phi$ is surjective but $\A\ab$ is of lower rank, $\phi$ cannot be factored by $p:\A\to\A\ab$, which contradicts $\A\ab$ being the abelianization of $A$. Thus $\g_M$ must have an abelianization, as claimed.
\end{proof}

\subsection{Non-regular Lie algebroids}

We consider now arbitrary, possibly non-regular, Lie algebroids. Observe that the definition of the abelianization shows that if $\A\to M$ has abelianization $\A\ab$, then for any open set $U\subset M$ the restriction $\A|_U$ has abelianization $\A\ab|_U$. The following proposition gives a partial converse.

\begin{proposition}\label{pr:restrict}
	Let $p:\A\rightarrow \A\ab$ be a surjective morphism of Lie algebroids over the identity, where $\A\ab$ is abelian. If there exists a dense open $O\subset M$ such that $\A\ab|_O$ is the abelianization of $\A|_O$, then $\A\ab$ is the abelianization of $\A$.
\end{proposition}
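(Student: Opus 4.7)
The plan is to verify the universal property of the abelianization directly for $p: \A \to \A\ab$. Fix an open subset $U \subset M$, an abelian Lie algebroid $\B \to N$, and a Lie algebroid morphism $\phi: \A|_U \to \B$. Since $U \cap O$ is open and dense in $U$ and, by hypothesis, $\A\ab|_O$ is the abelianization of $\A|_O$, the universal property of the latter produces a unique morphism $\tilde{\phi}_0 : \A\ab|_{U \cap O} \to \B$ with $\tilde{\phi}_0 \circ p = \phi$ on $U \cap O$. The task is then to extend $\tilde{\phi}_0$ to a Lie algebroid morphism on all of $\A\ab|_U$.

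The crux is to show $\ker(p|_U) \subset \ker \phi$ fiberwise. The observation I would use is that $p: \A \to \A\ab$ is a surjective morphism of vector bundles of constant rank (covering the identity), so $\ker p \subset \A$ is itself a smooth vector subbundle. Hence, given any $v \in \ker p_x$ with $x \in U$, we can extend $v$ to a smooth local section $s$ of $\ker p$ over some neighborhood $V \subset U$ of $x$. Picking a sequence $x_n \in V \cap O$ with $x_n \to x$ (possible since $O$ is dense), the factorization over $U \cap O$ yields $\phi(s(x_n)) = 0$ for all $n$, and continuity of the bundle map $\phi$ gives $\phi(v) = \lim_n \phi(s(x_n)) = 0$.

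With this containment in hand, I define $\tilde{\phi}: \A\ab|_U \to \B$ fiberwise by $\tilde{\phi}(a) := \phi(b)$ for any $b \in p^{-1}(a)$; this is well-defined and clearly agrees with $\tilde{\phi}_0$ on $U \cap O$. Smoothness is local: choosing a smooth splitting $\sigma$ of $p$ on a small open $V \subset U$, one has $\tilde{\phi}|_V = \phi \circ \sigma$. To check that $\tilde{\phi}$ is a Lie algebroid morphism, I would appeal to the de Rham characterization recalled earlier: for any $\omega \in \Omega^k(\B)$, the form $\tilde{\phi}^* \dd_\B \omega - \dd_{\A\ab} \tilde{\phi}^* \omega \in \Omega^{k+1}(\A\ab|_U)$ is a smooth section that vanishes on $U \cap O$ (where $\tilde{\phi}$ coincides with the morphism $\tilde{\phi}_0$), hence vanishes on all of $U$. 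Uniqueness of $\tilde{\phi}$ follows by the same density argument applied to the difference of any two candidate factorizations.

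The main obstacle is the first step, showing $\ker(p|_U) \subset \ker \phi$. Although a priori $\ker \phi$ may behave badly near $U \setminus O$, the kernel of $p$ has constant rank (because $\A\ab$ has constant rank and $p$ is fiberwise surjective), hence is a subbundle admitting smooth local sections through arbitrary elements. This is precisely what allows the continuity/density argument to promote the algebraic factorization from $U \cap O$ across the closed set $U \setminus O$.
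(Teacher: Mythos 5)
Your proposal is correct and follows essentially the same route as the paper: the key step in both is that $\ker p$ is a vector subbundle (by constant rank of the surjection $p$), so the containment $\ker(p|_{U\cap O})\subset\ker(\phi|_{U\cap O})$ coming from the universal property over $O$ propagates to $\ker(p|_U)\subset\ker\phi$ by density and continuity. Your additional details (the local-section/sequence argument and the de Rham check that the induced map is a Lie algebroid morphism) merely spell out steps the paper leaves implicit.
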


\begin{proof}
	Since $p$ is surjective, $\ker(p)$ is a subbundle of $\A$. Let $\phi:\A|_U\rightarrow\B$ be a morphism, where $\B$ is abelian. Since $\A\ab|_O$ is the abelianization of $\A|_O$, we have that $\ker(p|_{U\cap O})\subset \ker(\phi|_{U\cap O})$. It follows that 
    \[ \ker(p|_U)=\overline{\ker(p|_{U\cap O})}\subset\overline{\ker(\phi|_{U\cap O})}\subset \ker(\phi),\] 
    where we use that $O$ is an open dense set and that $\ker(p)$ is a subbundle (here the closures are in $\A|_U$). Thus there is a unique induced algebroid morphism $\tilde{\phi}:\A\ab|_U\rightarrow\B$ such that $\tilde{\phi}\circ p=\phi$. So $\A\ab$ is the abelianization of $\A$. 
\end{proof}
Now we are ready for the main theorem.

\begin{theorem}
\label{th:main}
	A Lie algebroid $\A$ has an abelianization if and only if $\overline{[\g_M,\g_M]}\subset \A$ is a vector subbundle.
\end{theorem}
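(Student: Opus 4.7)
The plan is to reduce the forward direction to the regular case handled in Proposition~\ref{prop:regular:abelianization}; the reverse implication $(\Leftarrow)$ is immediate from Proposition~\ref{prop:half:main:thm}, which builds the abelianization as $\A/\overline{[\g_M,\g_M]}$ whenever that closure is a subbundle.

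For $(\Rightarrow)$, I would suppose $p\colon\A\to\A\ab$ is the abelianization. Since $p$ is a surjective bundle map, $K:=\ker p\subset\A$ is a vector subbundle, and from the already-established fact that $\overline{[\g_M,\g_M]}\subset\ker p$ we have $\overline{[\g_M,\g_M]}\subset K$. The goal is then to prove the reverse inclusion, which will identify $\overline{[\g_M,\g_M]}$ with the subbundle $K$.

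The key step is to localize to the open dense subset $M_\reg\subset M$ of points where the anchor has locally constant rank. Over $M_\reg$ the restricted algebroid $\A|_{M_\reg}$ is regular, and the sheaf flavor of the definition of abelianization (Remark~\ref{rem:definition:sheaf:like}) forces $p|_{M_\reg}\colon \A|_{M_\reg}\to\A\ab|_{M_\reg}$ to be an abelianization of $\A|_{M_\reg}$. Applying Proposition~\ref{prop:regular:abelianization} then yields that the closure of $[\g_M,\g_M]|_{M_\reg}$ taken inside $\A|_{M_\reg}$ is a vector subbundle, and combined with the explicit form of the abelianization in Proposition~\ref{prop:half:main:thm} this closure coincides with $K|_{M_\reg}$.

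To finish, I would use a density argument in the total space: since $K$ is a vector subbundle of $\A$ over $M$ and $M_\reg$ is dense in $M$, continuity of local frames makes $K|_{M_\reg}$ dense in $K$ as a subset of $\A$. Therefore
\[ K=\overline{K|_{M_\reg}}^{\A}\subset \overline{[\g_M,\g_M]}^{\A}\subset K, \]
so $\overline{[\g_M,\g_M]}=K$ is a vector subbundle of $\A$, as required. The only point requiring care is the bookkeeping of closures taken inside $\A|_{M_\reg}$ versus in the ambient $\A$, but this is harmless since $\A|_{M_\reg}$ is open in $\A$ and the density of $K|_{M_\reg}$ in $K$ is a purely local statement about subbundles.
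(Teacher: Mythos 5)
Your proof is correct and follows essentially the same route as the paper: both reduce the forward direction to the regular case over the dense open set $M_\reg$ via Proposition~\ref{prop:regular:abelianization} and then identify $\overline{[\g_M,\g_M]}$ with the subbundle $\ker p$ by a density argument. The only cosmetic difference is that the paper works on each connected component of $M_\reg$ separately (where the rank of the anchor is genuinely constant, so ``regular'' literally applies) and invokes Lemma~\ref{lem:dense} to produce an open dense set where $[\g_M,\g_M]$ itself, rather than its fiberwise closure, equals $\ker p$; you should make the same component-by-component restriction explicit, but this does not affect the substance of the argument.
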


\begin{proof}
	We already know that if $\overline{[\g_M,\g_M]}\subset\A$ is a vector subbundle then $\A$ admits an abelianization (cf.~Proposition \ref{prop:half:main:thm}). 
 
    For the converse, suppose $\A$ admits an abelianization  $p:\A\to\A\ab$.  Take $U_k$ and $O_k$ as in the proof of Lemma \ref{lem:dense}. Applying \ref{prop:regular:abelianization} to $\A|_{U_k}$, it follows from Lemma \ref{lem:dense} that
    \[ \ker(p_x)=[\g_x,\g_x], \quad \forall x\in O_k. \]
    The union $O=\cup_k O_k\subset M$ is an open dense where 
    \[ \ker(p|_{O})=[\g_M,\g_M]|_{O}. \]
    Since $\ker(p)$ is a subbundle, we must have $\overline{[\g_M,\g_M]}=\ker(p)$, so the result follows.
\end{proof}

\subsection{Functoriality}
One can use the previous results to show that the abelianization behaves functorially relative to several operations with Lie algebroids.

We observed before that, as a consequence of the definition, if a Lie algebroid $\A$ admits an abelianization $\A\ab$ then upon restriction to an open set $U$ one has
\[ (\A|_U)\ab=\A\ab|_U. \]
We can generalize this result to pullback algebroids. For that, we recall that if $\A\to N$ a Lie algebroid and $f:M\to N$ is a map transverse to the anchor $\rho_\A$, then the pullback Lie algebroid $f^!\A\to M$ has supporting vector bundle
\[ f^!\A:=\A\times_{TN}TM=\{(a,v)\in \A\times TM:\rho(a)=\dd f(v)\}. \]
Its anchor is the projection $\pr_{TM}$ and the Lie bracket is given by
\[ [(s_1,X_1),(s_2,X_2)]_{f^!\A}:=([s_1,s_2]_\A,[X_1,X_2]),\]
for any $s_i\in\Gamma(\A)$, $X_i\in\X(M)$, and extended to arbitrary sections so that the Leibniz identity holds. This makes the bundle map
		\[\begin{tikzcd}[ampersand replacement=\&]
			f^!\A \arrow[r, "\pr_\A"]\arrow[d] \& \A\arrow[d]\\
			M\arrow[r, "f"]\& N
		\end{tikzcd}\]
a Lie algebroid morphism. Moreover, if $\phi:\A\to\B$ is a Lie algebroid morphism covering the identity, then there is a morphism between the pullbacks giving a commutative diagram
	\[\begin{tikzcd}[ampersand replacement=\&]
			f^!\A \arrow[r, "f^!\phi"]\arrow[d,"\pr_\A"] \& f^!\B\arrow[d,"\pr_\B"] \\
			\A \arrow[r, "\phi"] \& \B
    \end{tikzcd}\]

\begin{proposition}
    	Let $f: M\rightarrow N$ be a submersion and $\A\to N$ a Lie algebroid with abelianization $\A\ab$. Then $f^!\A$ admits an abelianization and one has
        \[ (f^!\A)\ab\simeq f^!(\A\ab). \]
\end{proposition}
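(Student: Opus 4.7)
The plan is to apply Theorem \ref{th:main} to reduce the existence statement to a subbundle condition on the closure of the commutator of the isotropy of $f^!\A$, and then identify the resulting quotient with $f^!(\A\ab)$. I will write $\g^\A_M$ and $\g^{f^!\A}_M$ for the two isotropy bundles to avoid ambiguity.

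First, I would identify the isotropy of $f^!\A$. Since the anchor of $f^!\A$ is $\pr_{TM}$, its fiber at $x\in M$ is $\{(a,0):a\in\g^\A_{f(x)}\}$, giving a canonical identification of bundles of Lie algebras
\[ \g_M^{f^!\A}\;\simeq\; f^*\g_M^\A,\]
and since the bracket of two elements of the isotropy at a single point is computed purely from the Lie algebroid structure at that point, at the pointwise level one has
\[ [\g_M^{f^!\A},\g_M^{f^!\A}]_x\;=\;[\g_M^\A,\g_M^\A]_{f(x)},\qquad\forall x\in M.\]

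The key step is then to check that
\[ \overline{[\g_M^{f^!\A},\g_M^{f^!\A}]}\;=\;f^*\overline{[\g_M^\A,\g_M^\A]}.\]
The inclusion $\subset$ is immediate because the right-hand side is closed. For the reverse inclusion, I would use that a submersion admits local smooth sections around every point, so any convergent net $y_\alpha\to f(x)$ in $N$ lifts to a convergent net $x_\alpha\to x$ in $M$ with $f(x_\alpha)=y_\alpha$; this lets me lift an approximating net in $\overline{[\g_M^\A,\g_M^\A]}$ to an approximating net inside the pullback. Since $\A$ has an abelianization, Theorem \ref{th:main} says $\overline{[\g_M^\A,\g_M^\A]}$ is a vector subbundle of $\A$, and its pullback is then a vector subbundle of $f^!\A$. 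Applying Theorem \ref{th:main} a second time yields the existence of the abelianization
\[ (f^!\A)\ab\;=\;f^!\A\,/\,f^*\overline{[\g_M^\A,\g_M^\A]}.\]

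Finally, to identify this quotient with $f^!(\A\ab)$, I would use the Lie algebroid morphism $f^!p:f^!\A\to f^!(\A\ab)$, $(a,v)\mapsto (p(a),v)$, induced by the quotient $p:\A\to\A\ab$. Using that $\rho\ab\circ p=\rho$, any element $([a'],v)\in f^!(\A\ab)$ lifts to $(a'',v)\in f^!\A$ for any preimage $a''$ of $[a']$, so $f^!p$ is surjective. Its kernel at $x$ is $\{(a,v):p(a)=0,\ v=0\}$, which coincides with $f^*\ker p=f^*\overline{[\g_M^\A,\g_M^\A]}$. Hence $f^!(\A\ab)\simeq (f^!\A)\ab$. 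The main obstacle is the closure–pullback exchange in the key step, which uses essentially that $f$ is a submersion (so that local sections of $f$ transport approximating sequences).
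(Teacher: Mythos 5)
Your proof is correct, and it follows the same overall strategy as the paper: identify the isotropy of $f^!\A$ with $f^*\g_N$ and its commutator with $f^*[\g_N,\g_N]$, show that the closure of the latter is a vector subbundle, invoke Theorem \ref{th:main}, and then match the quotient with $f^!(\A\ab)$ via the surjective morphism $f^!p$ and its kernel. The one genuine difference is how the closure is controlled. The paper uses Lemma \ref{lem:dense} to produce a dense open $O\subset N$ on which $[\g_N,\g_N]=\overline{[\g_N,\g_N]}=\ker p$, observes that $f^{-1}(O)$ is dense because $f$ is a submersion, and concludes $\overline{[\g_M,\g_M]}=\ker(f^!p)$ by comparing a closed subbundle with a set it contains densely. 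You instead prove the exchange $\overline{f^*[\g_N,\g_N]}=f^*\overline{[\g_N,\g_N]}$ directly, lifting approximating nets through local sections of the submersion; this is correct, a bit more self-contained (Lemma \ref{lem:dense} is not needed at all), and makes transparent that surjectivity of $f$ plays no role, so the paper's preliminary reduction to $f$ surjective is unnecessary in your version. The trade-off is that your mechanism is tied to the existence of local sections: in the generalization to Proposition \ref{prop:regular:transverse}, where $f$ is merely transverse to the anchor, nets in $N$ need not lift through $f$, and one must fall back on the paper's density argument together with the saturation of $O$. Both routes are valid here; yours is the cleaner one for the submersion case.
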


\begin{proof}
    Since the image of a submersion is an open set, we can restrict $\A$ to $f(M)$. Hence, without loss of generality, we can assume that $f$ is surjective. 
    
    Let $p:\A\to\A\ab$ be an abelianization of $\A$. Then $f^!p: f^!\A\to f^!\A\ab$ is a surjective morphism onto an abelian algebroid. Also, if $\g_N$ is the isotropy bundle of $\A$, then the isotropy bundle of $f^!\A$ is
    \[ \g_M:=\{(a,0_x)\in\A\times TM:\rho(a)=0_{f(x)}\}=f^*\g_N. \]
    It follows that
    \[ [\g_M,\g_M]=f^*[\g_N,\g_N]. \]
    By Lemma \ref{lem:dense}, there is an open dense set $O\subset N$ where 
    \[ \ker(p)|_O=[\g_N,\g_N]|_O. \] 
    Since $f$ is a surjective submersion, $O':=f^{-1}(O)$ is an open dense set in $M$ where we have 
    \[ \ker(f^!p)|_{O'}=[\g_M,\g_M]|_{O'}. \] 
    This implies that 
    \[ \overline{[\g_M,\g_M]}=\ker(f^!p),\]
    so $\overline{[\g_M,\g_M]}\subset f^!\A$ is a vector subbundle. By Proposition \ref{prop:half:main:thm}, $f^!\A$ has abelianization
    \[ (f^!\A)\ab=(f^!\A)/\overline{[\g_M,\g_M]}=f^!(\A/\overline{[\g_N,\g_N]})=f^!(\A\ab). \]
\end{proof}

In the previous proof the assumption that $f:M\to N$ is a submersion was used to guarantee that the preimage of a dense open subset $O\subset f(M)$ is an open dense subset of $M$. This property still holds if $f:M\to N$ is a surjective map transverse to a (non-singular) foliation $\F$ of $N$ and the open dense set $O\subset M$ is \emph{saturated}. Hence, for regular Lie algebroids we have the following result.

\begin{proposition}
\label{prop:regular:transverse}
    	Let $\A\to N$ be a regular Lie algebroid and let $f: M\rightarrow N$ be a surjective map transverse to the anchor $\rho_\A$. If $\A$ has an abelianization $\A\ab$, then $f^!\A$ admits an abelianization and one has
        \[ (f^!\A)\ab\simeq f^!(\A\ab). \]
\end{proposition}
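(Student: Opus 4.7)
The plan is to mimic the proof of the preceding proposition, reducing the claim to Theorem \ref{th:main} by identifying $\overline{[\g_M,\g_M]}$ inside $f^!\A$ with $\ker(f^!p)$, where $p:\A\to\A\ab$ is the abelianization of $\A$. Since $\A$ is regular with foliation $\F$, Proposition \ref{prop:regular:abelianization} shows that $\A\ab$ is also regular with the same foliation $\F$; in particular $f$ is transverse to $\rho_{\A\ab}$ as well, and $f^!(\A\ab)$ is a well-defined abelian Lie algebroid. The induced morphism $f^!p:f^!\A\to f^!(\A\ab)$ is a surjective algebroid morphism covering the identity, and its kernel is $f^*\ker(p)$, which is a vector subbundle of $f^!\A$ because $\ker(p)=\overline{[\g_N,\g_N]}$ is a subbundle of $\A$.

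Next I would observe that the isotropy of $f^!\A$ is $\g_M=f^*\g_N$, and that the fiberwise Lie bracket satisfies $[\g_M,\g_M]_x=[\g_N,\g_N]_{f(x)}$ for every $x\in M$. The inclusion $\overline{[\g_M,\g_M]}\subset\ker(f^!p)$ is automatic from $f^!p$ being a morphism into an abelian algebroid. For the reverse inclusion, applying Lemma \ref{lem:dense} to $\A$ produces an open, dense, $\F$-saturated subset $O\subset N$ on which $[\g_N,\g_N]=\ker(p)$, so that $[\g_M,\g_M]=\ker(f^!p)$ throughout $f^{-1}(O)$. Provided $f^{-1}(O)$ is dense in $M$, the fact that $\ker(f^!p)$ is a subbundle allows one to approximate any $(x,v)\in\ker(f^!p)$ by values of a local section restricted to $f^{-1}(O)$, yielding $\ker(f^!p)\subset\overline{[\g_M,\g_M]}$. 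Theorem \ref{th:main} then identifies
\[ (f^!\A)\ab \;=\; f^!\A/\overline{[\g_M,\g_M]} \;=\; f^!\A/\ker(f^!p) \;\simeq\; f^!(\A\ab). \]

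The main obstacle, and the only place where regularity of $\A$ is used in an essential way, is establishing density of $f^{-1}(O)$ in $M$. My plan is to argue locally in foliation charts for $\F$. Around any $y\in N$, choose a foliation chart $U\cong P\times T$ whose plaques are the slices $P\times\{t\}$; since $O$ is $\F$-saturated and open, inside $U$ it is a union of plaques and hence has the form $P\times T_O$ for some open $T_O\subset T$, and density of $O$ in $N$ forces $T_O$ to be dense in $T$. On the other hand, the transversality of $f$ to $\F$ is equivalent to the composition $\pi_T\circ f$ with the transverse projection $\pi_T:U\to T$ being a submersion on $f^{-1}(U)$, so $f^{-1}(O\cap U)=(\pi_T\circ f)^{-1}(T_O)$ is open and dense in $f^{-1}(U)$. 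Covering $N$ by foliation charts then produces an open cover of $M$ on each piece of which the preimage of $O$ is dense, whence $f^{-1}(O)$ is dense in $M$ as required.
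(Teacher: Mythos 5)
Your proposal is correct and follows essentially the same route as the paper: reduce to the argument of the preceding (submersion) proposition, with the only new ingredient being the density of $f^{-1}(O)$ in $M$, which you establish exactly as the paper does, via foliation charts in which the transverse projection composed with $f$ becomes a submersion and saturation of $O$ makes its preimage the preimage of a dense open set of transverse parameters. The auxiliary observations (same foliation for $\A$ and $\A\ab$, $\ker(f^!p)=f^*\ker(p)$ a subbundle, $[\g_M,\g_M]=f^*[\g_N,\g_N]$) all match the paper's proof of the previous proposition, to which it defers.
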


\begin{proof}
    Since $f$ is transverse to the anchor $\rho_\A$, the pullback algebroid $f^!\A$ is well defined. The proof of the previous proposition holds word-by-word if one can show that $f^{-1}(O)$ is open and dense in $M$ where $O\subset N$ is the dense, saturated, open set given by Lemma \ref{lem:dense}.

    To see this, cover $M$ by foliated charts $(U_i,\phi_i)$ for $T\F=\mathrm{im}(\rho_\A)$. So $\phi_i:U_i\to \R^q$ is a submersion such that the plaques of $\F$ in $U_i$ are the level sets $\phi_i^{-1}(p)$. Since 
    $f: M\rightarrow N$ is transverse to $\F$, the composition
    \[ f_i:=\phi_i\circ f:f^{-1}(U_i)\to \R^q, \]
    is a submersion. Since $O$ is saturated, we see that
    \[ f^{-1}(O\cap U_i)=f_i^{-1}(\phi_i(O\cap U_i)) \]
    is open and dense in $f^{-1}(U_i)$. Since $f$ is surjective and $N=\bigcup_I U_i$, it follows that $f^{-1}(O)$ is open and dense in $M$, as claimed.
\end{proof}

We provide an example illustrating the assumptions in the previous two results.

\begin{example}
	Let $M=\T^2\simeq(\R/2\pi\Z)^2$ and consider the Lie algebroid $\A\to \T^2$ with underlying bundle the trivial rank 3 vector bundle, anchor
    \[ \rho(e_1):=\frac{\partial}{\partial \theta^1},\quad \rho(e_2)=\rho(e_3):=0,\]
    and Lie bracket defined by
    \[ [e_2,e_3]_{(\theta^1,\theta^2)}:=\sin(\theta^2)\, e_2,\quad [e_1,e_2]=[e_1,e_3]:=0. \]
    This is a regular Lie algebroid with isotropy bundle 
    \[ \g_M=\R e_2\oplus \R e_3, \]
    and commutator bundle
    \[ [\g_M,\g_M]_{(\theta^1,\theta^2)}= \begin{cases}
    \R e_2, & \text{if }\theta^2\ne 0\\
    0, & \text{if }\theta^2=0.
    \end{cases}\]
    This has closure $\overline{[\g_M,\g_M]}=\R e_2$, a vector subbundle of $\A$, so the abelianization $\A\ab$ exists (and has rank 2). 
    
    Now consider the surjective map $f:\T^2\to\T^2$, $f(\theta^1,\theta^2)=(s(\theta^1),\theta^2)$, where $s:\S^1\to\S^1$ is a surjective map which equals $0$ on some open interval $I\subset \T$. The map $f$ is transverse to the anchor, so by Proposition \ref{prop:regular:transverse} the abelianization of $f^!\A$ exists. 
\end{example}

\subsection{Application to Poisson geometry}

Recall that a Poisson manifold is a pair $(M,\pi)$ consisting of a smooth manifold $M$ and a bivector $\pi\in\mathfrak{X}^2(M)$ whose Schouten bracket vanishes: 
\[ [\pi,\pi]=0. \]
A Poisson structure can also be defined as a Lie bracket on the space of smooth functions which is also a biderivation. We refer to \cite{CFM22} for background in Poisson geometry. 

\begin{example}[Linear Poisson structure]
	Let $\g$ be a Lie algebra. Fix a basis $\{e_i\}$ for $\g$ and denote by $\{\sigma_i\}$ the dual of $\g^*$.  Then $(\g^*,\pi_{\g})$ is a Poisson manifold, where $\pi_{\g}$ is defined by
    $$\pi_{\g}:=\frac{1}{2}\sum_{i,j,k}c^{ij}_k\sigma_k\frac{\partial}{\partial \sigma_i}\wedge\frac{\partial}{\partial \sigma_j},$$
    with $c^{ij}_k$ denoting the structure constants relative to the basis $\{e_i\}$. 
\end{example}

It is well-known that the cotangent bundle of a Poisson manifold $(M,\pi)$ has an induced Lie algebroid structure. The anchor is given by the contraction map $\pi^{\#}:T^*M\to TM,\alpha\mapsto i_{\alpha}\pi$ and the Lie bracket on the space of one forms $\Omega^1(M)$ is given by:
$$[\alpha,\beta]_{\pi}:=\Lie_{\pi^{\sharp}\alpha}(\beta)-\Lie_{\pi^{\sharp}\beta}(\alpha)-d(\pi(\alpha,\beta)).$$
We call $(T^*M,[\cdot,\cdot]_{\pi},\pi^{\#})$ the \emph{cotangent Lie algebroid} associated to $(M,\pi)$. 

Via the cotangent Lie algebroid, all the notions associated to Lie algebroids also arise for Poisson manifolds. In particular, $\mathrm{im}\,(\pi^{\#})\subset TM$ integrates to a singular foliation, and for any $x\in M$ one has the isotropy Lie algebra $\ker(\pi^{\#}_x)\subset T_x^*M$. This isotropy Lie algebra coincides with $(\mathrm{im}\,\pi^{\#})^0$, the conormal space to the foliation.

\begin{proposition}\label{prop:pois:sing}
    A Poisson manifold with 
	non-abelian cotangent Lie algebroid does not admit an abelianization.
\end{proposition}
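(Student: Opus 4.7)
The plan is to apply Theorem \ref{th:main}: it suffices to show that the closure $\overline{[\g_M,\g_M]}\subset T^*M$ is not a vector subbundle, where $\g_M=\ker\pi^\#$ is the isotropy of the cotangent algebroid. The crucial geometric input is that at every regular point of $\pi$ the isotropy Lie algebra is abelian, so if there is any non-abelian isotropy at all it must be concentrated on the complement of $M_\reg$, which is a nowhere dense closed set.

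First I would verify the key claim that $[\g_y,\g_y]=0$ for every $y\in M_\reg$. Fix such a $y$ and an open neighborhood $V\subset M_\reg$ on which $\rank(\pi^\#)$ is constant, so that $\ker\pi^\#|_V$ is a genuine vector subbundle. Given $\alpha_0,\beta_0\in\g_y$, extend them to sections $\alpha,\beta\in\Gamma(\ker\pi^\#|_V)$. Then $\pi^\#\alpha\equiv 0$ and $\pi^\#\beta\equiv 0$ on $V$, and in particular $\pi(\alpha,\beta)\equiv 0$ on $V$. Substituting into the Koszul bracket formula
\[ [\alpha,\beta]_\pi=\Lie_{\pi^\#\alpha}\beta-\Lie_{\pi^\#\beta}\alpha-d(\pi(\alpha,\beta)) \]
gives $[\alpha,\beta]_\pi\equiv 0$ on $V$. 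Evaluating at $y$ yields $[\alpha_0,\beta_0]_{\g_y}=0$, as claimed.

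Next I would upgrade this pointwise vanishing to vanishing of the closure on all of $M_\reg$. Fix $y\in M_\reg$ and the neighborhood $V\subset M_\reg$ as above. For any sequence $z_n\to y$ in $M$, eventually $z_n\in V$, so $[\g_{z_n},\g_{z_n}]=0$. Hence any element of $\overline{[\g_M,\g_M]}\cap T^*_yM$, being a limit of elements drawn from $[\g_{z_n},\g_{z_n}]$, is necessarily the zero covector. Thus $\overline{[\g_M,\g_M]}_y=0$ for every $y\in M_\reg$.

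Finally I would conclude as follows. By hypothesis there exists $x_0\in M$ with $[\g_{x_0},\g_{x_0}]\neq 0$, so $\overline{[\g_M,\g_M]}_{x_0}\neq 0$. Since $M_\reg$ is open and dense, every neighborhood of $x_0$ contains points where the fiber of $\overline{[\g_M,\g_M]}$ is $0$. Therefore $\overline{[\g_M,\g_M]}$ cannot have locally constant rank near $x_0$, and in particular cannot be a vector subbundle; Theorem \ref{th:main} then rules out the existence of an abelianization of $T^*M$. I do not expect any genuine obstacle: the only nontrivial input is the vanishing of the isotropy bracket at regular points, which is a standard computation with the Koszul bracket, and the rest is a straightforward application of density of $M_\reg$ together with Theorem \ref{th:main}.
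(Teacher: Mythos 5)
Your proposal is correct and follows essentially the same route as the paper: both reduce the claim to Theorem \ref{th:main} by showing that $\overline{[\g_M,\g_M]}$ vanishes over the dense open regular set while being nonzero at some point, hence cannot be a subbundle. The only difference is that you spell out, via the Koszul bracket, the standard fact that the isotropy Lie algebras are abelian at regular points, which the paper simply quotes.
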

\begin{proof}
Recall first that for a regular Poisson manifold, the isotropy Lie algebras are all abelian. In particular, this also holds for the regular set of any Poisson manifold $(M,\pi)$, providing a dense open set $M^{\textrm{reg}}\subset M$ where the isotropy Lie algebras are abelian. 

Now, assume that $T^*M$ is non-abelian. Then there exists $x\in M$ such that $[\g_x,\g_x]\neq 0$. On the other hand, $[\g,\g]=0$ on the open set $M^{\textrm{reg}}$. Hence $\overline{[\g,\g]}$ is not a subbundle and by Theorem \ref{th:main}, $T^*M$ admits no abelianization.
\end{proof}
\begin{example}
	Consider a linear Poisson structure $(\g^*, \pi_{\g})$ corresponding to a Lie algebra $\g$. The isotropy Lie algebra at $x=0$ is $\g$, so it follows that the cotangent Lie algebroid is abelian if and only if $\g$ is abelian. So by Proposition \ref{prop:pois:sing}, $T^*\g^*$ has abelianization iff $\g$ is abelian, i.e., iff $\pi_{\g}\equiv0$.
\end{example}

Note that there are many examples of non-regular Poisson structures whose cotangent algebroid is abelian. A simple example is the Poisson structure on $\R^2$ defined by
    \[ \pi=(x^2+y^2)\frac{\partial}{\partial x}\wedge\frac{\partial}{\partial y}. \]
    
\begin{remark}
    Proposition \ref{prop:pois:sing} also holds for Dirac structures and, more generally, whenever one has a geometric structure defining a Lie algebroid whose regular part is forced to be abelian.    
\end{remark}

\section{Abelianization of Lie groupoids}\label{sec:gp}
\subsection{Definition of abelianization}
\subsubsection{Backgrounds on groupoids}

Recall that a groupoid $\G$ is a small category in which every arrow is invertible. We will use the following notations:
	\begin{itemize}
		\item $\G_1$ and $\G_0$ denote the sets of arrows and of objects;
		\item $s$ and $t$ denote the source and target maps;
        \item $m$, $i$ and $u$ denote the multiplication, the inverse and the unit maps, respectfully.
	\end{itemize}
 
For a groupoid $\G$, when it does not cause confusion, we will denote the set of arrows by $\G$ and the set of objects by $M$.  
\begin{definition}
	A Lie groupoid is a groupoid where $\G_1$ and $\G_0$ are smooth manifolds, all the structure maps are smooth and the source and target maps are submersions.
\end{definition}

As usual we do not assume $\G_1$ to be Hausdorff. We refer to \cite{CFM22,MK87} for details and background on Lie groupoids.
The proof of the following standard result can be found in \cite{MK87}.

\begin{proposition}\label{prop:normal}
	Let $\G$ be a Lie groupoid. If $\HH$ is a closed normal Lie subgroupoid of $\G$, then $\G/\HH$ is a Lie groupoid.
\end{proposition}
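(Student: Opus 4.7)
The plan is to construct the smooth manifold structure on $\G/\HH$ using Godement's theorem on quotients by equivalence relations, and then descend the groupoid structure maps via the universal property of smooth quotients.

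Because $\HH$ is normal, each $h\in\HH$ satisfies $s(h)=t(h)$, and $\HH$ acts on $\G$ from the right via groupoid multiplication whenever defined. The orbit equivalence relation is
\[ R=\{(g,g')\in\G\times\G:\exists\,h\in\HH\text{ with }g'=gh\}, \]
and Godement's criterion requires $R$ to be an embedded closed submanifold of $\G\times\G$ and the first projection $\pi_1:R\to\G$ to be a surjective submersion.

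To verify these conditions, I consider the map $\Phi:\G\times_M\HH\to\G\times\G$, $(g,h)\mapsto(g,gh)$, where the fibered product is taken with respect to $s:\G\to M$ and $t|_\HH:\HH\to M$. Both maps are submersions (since $\HH$ is a Lie subgroupoid), so the domain is a smooth manifold. The map $\Phi$ is a smooth injective immersion with image exactly $R$, and a smooth inverse on its image is given by $(g,g')\mapsto(g,g^{-1}g')$. Closedness of $R$ uses that $\HH$ is closed in $\G$: if $(g_n,g_nh_n)\to(g,g')$ in $\G\times\G$, then by continuity of inversion and multiplication $h_n=g_n^{-1}(g_nh_n)\to g^{-1}g'\in\overline{\HH}=\HH$, so $(g,g')\in R$. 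The composition $\pi_1\circ\Phi$ is the projection $\G\times_M\HH\to\G$ onto the first factor, which is a submersion because $t|_\HH$ is. Godement's theorem then endows $\G/\HH$ with a smooth manifold structure (possibly non-Hausdorff) for which $\pi:\G\to\G/\HH$ is a surjective submersion.

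Next I descend the groupoid structure maps. Because $s(h)=t(h)$ for every $h\in\HH$, the source and target are constant on fibers of $\pi$, hence factor through smooth submersions $\bar s,\bar t:\G/\HH\to M$. Normality of $\HH$ ensures that multiplication descends: for $g_1'=g_1h_1$ and $g_2'=g_2h_2$ with $g_1g_2$ composable, one has
\[ g_1'g_2'=(g_1g_2)(g_2^{-1}h_1g_2)h_2, \]
and normality forces $g_2^{-1}h_1g_2\in\HH$, so $[g_1'g_2']=[g_1g_2]$; smoothness of the induced multiplication follows by the universal property applied to the surjective submersion $\pi\times\pi$ on composable arrows. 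The unit and inverse descend similarly, using that $\HH$ is invariant under inversion. The main obstacle is the verification of Godement's conditions, and the critical point within it is the closedness of $R$, which depends essentially on the hypothesis that $\HH\subset\G$ is closed; non-Hausdorffness of $\G_1$ causes no additional difficulty, since Godement's criterion and the subsequent descent of smooth maps apply in the same way.
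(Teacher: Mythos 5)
Your proof is correct and follows essentially the same route as the paper's source for this statement: the paper does not prove the proposition itself but refers to \cite{MK87}, where the smooth structure on $\G/\HH$ is likewise obtained from Godement's criterion applied to the graph of the right $\HH$-action (using closedness of $\HH$ for closedness of $R$ and the submersion $t|_{\HH}$ for the submersion condition), after which the structure maps descend along the surjective submersion $\pi$ exactly as you describe. The only implicit ingredient worth making explicit is the convention that a normal Lie subgroupoid is a wide, embedded subgroupoid contained in the isotropy, so that $s(h)=t(h)$ for all $h\in\HH$ and the quotient remains a groupoid over $M$; this is precisely the situation for $\overline{(\G,\G)}$ in the paper, so your argument applies as stated.
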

For $x\in M$ we have the \emph{isotropy group} at $x$, defined by $\G_x=s^{-1}(x)\cap t^{-1}(x)$. For a Lie groupoid, this is in fact a Lie group. There is also the \emph{orbit} through $x$, defined by $O_x=t(s^{-1}(x))$. For a Lie groupoid, this is an immersed submanifold of $M$. The smooth structure is induced by the principal bundle $\G_x\circlearrowright s^{-1}(x)\xrightarrow{t} O_x$. When the dimension of the orbits is constant we call the groupoid \emph{regular}. When there is only a single orbit, we say the groupoid is \emph{transitive}. We call a Lie groupoid \emph{source connected} if the source fibres are connected, and we call it \emph{source 1-connected} if they are in addition simply connected.
A groupoid is called \emph{abelian} if all the isotropy groups are abelian.

\subsubsection{Abelianization}

For the following definition we assume that one is working in a topological category.

\begin{definition}
	The abelianization of a groupoid $\mathcal{G}\rightrightarrows M$ consists of:
	\begin{itemize}
		\item an abelian groupoid $\mathcal{G}^{ab}\rightrightarrows M$;
		\item a surjective morphism $p:\mathcal{G}\rightarrow\mathcal{G}^{ab}$ covering $Id_M$; 
	\end{itemize}
	such that for any open subset $U\subset M$, any abelian groupoid $\mathcal{H}\rightrightarrows N$ and morphism $\psi:\mathcal{G}|_U\rightarrow\mathcal{H}$, we have: 
	\begin{center}
		$\begin{tikzcd}[ampersand replacement=\&]
			\mathcal{G}|_U \arrow[r, "\psi"]\arrow[d,"p"] \& \mathcal{H} \\
			\mathcal{G}\ab|_U\arrow[dashed]{ur}[swap]{\exists ! \tilde{\psi}}
		\end{tikzcd}.$
	\end{center}
\end{definition}
Just as in the case of Lie algebroids, the abelianization is unique up to isomorphisms, if it exists.
The abelianization of groupoids is heavily dependent on the category we are working in. The existence of abelianizations differs greatly between different categories, and even if it exists in two categories, the abelianizations themselves can be different. The following examples illustrate these phenomena.
\begin{example}
	A topological groupoid $\G$ always has an abelianization, namely $\mathcal{G}^{ab}=\mathcal{G}/(\mathcal{G}_M,\mathcal{G}_M)$, where $\G_M$ denotes the bundle of isotropy groups of $\G$. In particular, the abelianization of a topological group is $G/(G,G)$.
\end{example}
\begin{example}
	A Lie group always has an abelianization $G^{ab}=G/\overline{(G,G)}$.
\end{example}  

So we see that already for a group, the abelianizations can differ between categories, since in general $(G,G)\subsetneq\overline{(G,G)}$.

\begin{example}\cite{bourbaki1989lie}
	Consider $G=(\widetilde{SL(2,\R)}\times\S^1)/\Z$, where $\Z$ is embedded as the covering group of $\widetilde{SL(2,\R)}$, the universal cover of $SL(2,\R)$, and  as a non-discrete subgroup of $\S^1$. Note that since $(SL(2,\R),SL(2,\R))=(SL(2,\R))$, the commutator $(G,G)$ is actually dense in $G$, thus $\overline{(G,G)}=G\neq(G,G)$.
\end{example}

\subsection{Abelianization of Lie groupoids}

A Lie groupoid does not always admit a smooth abelianization. This was already observed in \cite{contreras2018genus} where the following example was discussed.

\begin{example}
	Consider the action Lie groupoid $\mathcal{G}=SO(3)\times\mathbb{R}^3\rightrightarrows\mathbb{R}^3$ associated with the usual action $SO(3)$  on $\R^3$ by rotations. The isotropy group $\G_0$ at $0$ is $SO(3)$, and so is its commutator $(\G_0,\G_0)$. Any potential abelianization will need to quotient out $(\G_0,\G_0)$ and thus can not recover the natural map from this groupoid to the pair groupoid $\R^3\times\R^3$ away from $0$.
\end{example}

Although a Lie groupoid may not admit a admit a smooth abelianization, for \emph{transitive} Lie groupoids, one can always find such an abelianization.

\begin{proposition}\cite{contreras2018genus}
	A transitive Lie groupoid $\G$ has abelianization $\G\ab=\G/\overline{(\G_M,\G_M)}$.
\end{proposition}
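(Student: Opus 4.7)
The plan is to verify, in order, that $\overline{(\G,\G)}$ is a closed normal Lie subgroupoid of $\G$, that the resulting quotient is abelian, and finally that it enjoys the required universal property.

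The starting point is the classical description of a transitive Lie groupoid as a gauge groupoid: choosing $x_0 \in M$ and setting $G := \G_{x_0}$ and $P := s^{-1}(x_0)$, one has $\G \simeq (P \times P)/G$, where $P \to M$ is a principal $G$-bundle. Since a commutator $[g,h] = ghg^{-1}h^{-1}$ only makes sense when $g$ and $h$ share source and target, the subgroupoid $(\G,\G)$ is supported in the isotropy bundle and satisfies $(\G,\G)_x = (\G_x,\G_x)$. The isotropy bundle itself is the associated bundle $P \times_G G$ (with $G$ acting on itself by conjugation), hence a locally trivial bundle of Lie groups with typical fibre $G$. Because $\overline{(G,G)} \subset G$ is characteristic (it is in particular conjugation-invariant), the associated subbundle $P \times_G \overline{(G,G)}$ is a well-defined closed normal Lie subgroupoid, and a straightforward local argument in a trivializing chart identifies it with the fibrewise closure of $(\G,\G)$, that is, with $\overline{(\G,\G)}$.

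With this in hand, Proposition \ref{prop:normal} produces a Lie groupoid quotient $p: \G \to \G/\overline{(\G,\G)}$, whose isotropies are $\G_x/\overline{(\G_x,\G_x)}$ and are therefore abelian. For the universal property, let $\psi: \G|_U \to \HH$ be a morphism into an abelian Lie groupoid over some open $U \subset M$. Abelianness of $\HH$ forces $\psi$ to send every commutator to a unit, so $(\G,\G)|_U \subset \ker\psi$; since $\psi$ is continuous and $\ker\psi$ is closed, this extends to $\overline{(\G,\G)}|_U \subset \ker\psi$, yielding the unique factorization $\tilde{\psi}: (\G/\overline{(\G,\G)})|_U \to \HH$.

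The main obstacle is the smoothness claim in the first step: for a general (non-transitive) Lie groupoid, $\overline{(\G,\G)}$ need not be a Lie subgroupoid, which is precisely why the two preceding propositions in the introduction had to \emph{hypothesize} this property. In the transitive case the obstruction disappears thanks to the gauge description, which reduces everything to the purely Lie-theoretic facts that $\overline{(G,G)}$ is a closed Lie subgroup of $G$ and that characteristic subgroups of $G$ give rise to subbundles under the associated-bundle construction on the principal $G$-bundle $P$.
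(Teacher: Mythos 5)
The paper does not actually prove this proposition: it is quoted from \cite{contreras2018genus} without proof, so there is nothing internal to compare against. Your argument is correct and is essentially the standard one for the transitive case. The three points you leave implicit all check out, but are worth recording: (a) the identification $\overline{(\G,\G)} = P\times_G \overline{(G,G)}$ works because the local trivializations of the isotropy bundle $P\times_G G$ are by group isomorphisms, so they carry $(\G_x,\G_x)$ to $\{x\}\times (G,G)$ uniformly in $x$ and the closure of $(\G,\G)$ inside the (closed, embedded) isotropy bundle is therefore computed fibrewise; (b) $\overline{(G,G)}$ is a closed subgroup of $G$, hence an embedded Lie subgroup by Cartan's theorem, and it is invariant under all continuous automorphisms (in particular conjugations) because $(G,G)$ is characteristic, which is what makes the associated subbundle well defined and normal; (c) in the universal property, the induced map $\tilde{\psi}$ is automatically smooth because the projection $p:\G\to\G/\overline{(\G,\G)}$ obtained from Proposition \ref{prop:normal} is a surjective submersion. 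You also correctly observe that transitivity is exactly what removes the obstruction present in the non-transitive propositions, since it forces $\overline{(\G,\G)}$ to be a locally trivial bundle of closed normal subgroups. No gap.
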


Let $\overline{(\G_M,\G_M)}$ be the closure of the commutator $(\G_M,\G_M)$ in $\G$. We let 
$$\overline{(\G_M,\G_M)}^s:=\bigcup\overline{(\G_x,\G_x)}^s$$ 
be the union of all ``fiberwise closure''  of $(\G_x,\G_x)$. By ``fiberwise closure'', we mean that $\overline{(\G_x,\G_x)}^s$ is the closure of the commutator of $\G_x$ in the source fiber. In general, these two closures are not the same and we have $\overline{(\G_x,\G_x)}^s\subset \overline{(\G_x,\G_x)}$.

\begin{example}
	Let us look at the groupoid version of Example \ref{ex:basic}. There $\overline{(\G_M,\G_M)}$ is the trivial line bundle over $\R$, while $\overline{(\G_M,\G_M)}^s$ is $0$-dimensional at $0$. 
\end{example}

It is an immediate consequence of Proposition \ref{prop:normal} that if $\overline{(\G_M,\G_M)}$ is a normal Lie subgroupoid, then $\G/\overline{(\G_M,\G_M)}$ is the abelianization of $\G$. In this respect, we have the following observation.

\begin{lemma}\label{lem:clo:nor}
	If $\overline{(\G_M,\G_M)}$ is a subgroupoid of $\G$, then $\overline{(\G_M,\G_M)}$ is a normal subgroupoid of $\G$. 
\end{lemma}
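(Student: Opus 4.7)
The plan is to deduce normality of $\overline{(\G,\G)}$ from the purely algebraic normality of $(\G,\G)$, using continuity of the groupoid operations together with the submersion property of $s$. To set things up I will first record two preliminary facts. Algebraically, for any $a,b\in\G_x$ and any $g\in\G$ with $s(g)=x$, $t(g)=y$, one has $g[a,b]g^{-1}=[gag^{-1},gbg^{-1}]\in(\G_y,\G_y)$, so conjugation by $g$ sends $(\G_x,\G_x)$ into $(\G_y,\G_y)$; since $(\G,\G)=\bigsqcup_x(\G_x,\G_x)$, this makes $(\G,\G)$ a normal subgroupoid of $\G$ in the set-theoretic sense. Topologically, since the diagonal of $M\times M$ is closed and $(s,t):\G\to M\times M$ is continuous, the isotropy bundle is closed in $\G$, so $\overline{(\G,\G)}$ lies inside it and every $h\in\overline{(\G,\G)}$ has a well-defined base point $s(h)=t(h)$.

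The main step is to upgrade algebraic conjugation invariance of $(\G,\G)$ to its closure. Given $h\in\overline{(\G,\G)}$ with base point $x$ and $g\in\G$ with $s(g)=x$, $t(g)=y$, I want to show $ghg^{-1}\in\overline{(\G,\G)}$. Choose a sequence $h_n\to h$ with $h_n\in(\G,\G)$, and let $x_n:=s(h_n)=t(h_n)$, so $x_n\to x$. Using that $s:\G\to M$ is a submersion, I will pick a smooth local section $\sigma$ of $s$ with $\sigma(x)=g$ and set $g_n:=\sigma(x_n)$; then $s(g_n)=x_n$ and $g_n\to g$. The conjugates $g_n h_n g_n^{-1}$ are now well-defined, belong to $(\G,\G)$ by the algebraic step, and converge to $ghg^{-1}$ by joint continuity of multiplication and of inversion. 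Hence $ghg^{-1}\in\overline{(\G,\G)}$, i.e., $\overline{(\G,\G)}$ is conjugation-invariant. Combined with the hypothesis that $\overline{(\G,\G)}$ is a subgroupoid, this is exactly the assertion that it is normal.

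The one real obstacle is what forces the lifting argument above: the approximating commutators $h_n$ may live in isotropy groups at distinct base points $x_n$, so a single $g$ cannot conjugate all of them simultaneously. The submersion property of $s$ is used precisely to promote $g$ to a smoothly varying family $g_n$ with $s(g_n)=x_n$, and this is the one place where the Lie (rather than merely topological) structure of $\G$ enters the argument.
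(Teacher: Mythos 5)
Your proof is correct and follows essentially the same route as the paper: approximate an element of $\overline{(\G,\G)}$ by commutators, lift the conjugating arrow to a convergent family with matching sources, and pass to the limit using the algebraic identity $g[a,b]g^{-1}=[gag^{-1},gbg^{-1}]$. You merely make explicit two steps the paper leaves implicit, namely the use of a local section of the submersion $s$ to produce the approximating conjugators and the ``simple computation'' showing conjugates of commutators are commutators.
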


\begin{proof}
	Consider $h\, g\, h^{-1}$ where $g\in\overline{(\G_M,\G_M)}$ and $h\in\G$ such that  $s(h)=t(g)$. Let $g_i\in (\G_M,\G_M)$ be a sequence such that $g_i\rightarrow g$. We can pick $h_i\to h$ a sequence in $\G$ such that $s(h_i)=t(g_i)$. Now we have $h_i\, g_i\, h_i^{-1}\rightarrow h\, g\, h^{-1}$. A simple computation shows that $h_i\,g_i\, h_i^{-1}\in (\G_M,\G_M)$, which implies that $h\,g\, h^{-1}\in\overline{(\G_M,\G_M)}$ and thus that $\overline{(\G_M,\G_M)}$ is normal in $\G$.
\end{proof}

\begin{proposition}
	Given a Lie groupoid $\G$, if $\overline{(\G_M,\G_M)}^s$ is a closed submanifold, then $\G$ has abelianization $\G/\overline{(\G_M,\G_M)}^s$. 	
\end{proposition}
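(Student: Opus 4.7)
The plan is to mirror the structure used for $\overline{(\G,\G)}$ in the previous proposition: upgrade the hypothesis that $\overline{(\G,\G)}^s$ is a closed submanifold to the statement that it is a closed \emph{normal} Lie subgroupoid of $\G$, apply Proposition~\ref{prop:normal} to form the smooth quotient $\G/\overline{(\G,\G)}^s$, and then verify the universal property of the abelianization directly on isotropies.

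First I would check that $\overline{(\G,\G)}^s$ is a subgroupoid. Since $\G_x=s^{-1}(x)\cap t^{-1}(x)$ is closed in $s^{-1}(x)$, each fiber $\overline{(\G_x,\G_x)}^s$ coincides with the closure of $(\G_x,\G_x)$ inside the Lie group $\G_x$, and hence is a Lie subgroup of $\G_x$. Thus $\overline{(\G,\G)}^s$ is a union of subgroups of the isotropies, which automatically makes it a wide subgroupoid closed under product and inverse.

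Next I would establish normality by adapting the convergence argument in Lemma~\ref{lem:clo:nor} to the fiberwise setting. Given $g\in\overline{(\G_x,\G_x)}^s$ and $h\in\G$ with $s(h)=x$, pick $g_i\in (\G_x,\G_x)$ with $g_i\to g$ in $s^{-1}(x)$. Since the left translation $L_h:s^{-1}(x)\to s^{-1}(t(h))$ and the right translation $R_{h^{-1}}:s^{-1}(x)\to s^{-1}(t(h))$ are both diffeomorphisms of source fibers, one obtains $h g_i h^{-1}\to h g h^{-1}$ in $s^{-1}(t(h))$; each $h g_i h^{-1}$ lies in $(\G_{t(h)},\G_{t(h)})$, so the limit lies in $\overline{(\G_{t(h)},\G_{t(h)})}^s$. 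Combined with the hypothesis that $\overline{(\G,\G)}^s$ is a closed submanifold, Proposition~\ref{prop:normal} produces a Lie groupoid $\G\ab:=\G/\overline{(\G,\G)}^s$ whose quotient map $p$ is a surjective submersion and whose isotropy at $x$ is $\G_x/\overline{(\G_x,\G_x)}^s$, the Lie group abelianization of $\G_x$, hence abelian.

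Finally, for the universal property, let $\psi:\G|_U\to\HH$ be a morphism into an abelian Lie groupoid. For each $x\in U$, the restriction $\psi|_{\G_x}:\G_x\to\HH_{\psi_0(x)}$ is a continuous Lie group homomorphism into an abelian Lie group, so its kernel is a closed subgroup of $\G_x$ containing $(\G_x,\G_x)$, and thus contains the closure $\overline{(\G_x,\G_x)}^s$. Consequently $\psi$ kills $\overline{(\G,\G)}^s|_U$ and factors set-theoretically through $p$; smoothness of the factor morphism $\tilde{\psi}$ follows because $p$ is a surjective submersion. I expect the main obstacle to be the normality step, where one must confirm that the \emph{fiberwise} closure (as opposed to the global closure treated by Lemma~\ref{lem:clo:nor}) is genuinely preserved by conjugation; this is precisely the reason for invoking both $L_h$ and $R_{h^{-1}}$ as source-fiber diffeomorphisms in the convergence above.
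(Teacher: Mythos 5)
Your proof is correct and follows essentially the same route as the paper: exhibit $\overline{(\G,\G)}^s$ as a closed normal Lie subgroupoid, apply Proposition~\ref{prop:normal}, and verify abelianness and the universal property on the isotropies. The only cosmetic difference is that the paper obtains normality by first noting that closedness forces $\overline{(\G,\G)}^s=\overline{(\G,\G)}$ and then citing Lemma~\ref{lem:clo:nor}, whereas you rerun the same conjugation--convergence argument fiberwise (and you spell out the universal property, which the paper leaves implicit).
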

\begin{proof}
	If $\overline{(\G_M,\G_M)}^s$ is closed, then $\overline{(\G_M,\G_M)}^s=\overline{(\G_M,\G_M)}$. Since it is contained in the isotropy and on each source fiber it is a subgroup, it is a subgroupoid. Thus by lemma \ref{lem:clo:nor}, it is a normal Lie subgroupoid. So the quotient $\G/\overline{(\G_M,\G_M)}^s$ is the abelianization of $\G$.
\end{proof}

Note that the condition in the previous proposition is not a necessary condition for existence of an abelianization. For instance, the groupoid version of Example \ref{ex:basic} provides an example of a Lie groupoid which has an abelianization but where the fiberwise closure $\overline{(\G_M,\G_M)}^s$ is not a submanifold.

\subsection{Abelianization of diffeological groupoids}

We now consider abelianization of groupoids in the category of diffeological spaces. We will see that the flexibility allowed by diffeologies yields existence of abelianizations for a wide class of groupoids.

\subsubsection{Diffeology}
  In this section we describe only the definitions and results strictly necessary to express our results. For a detailed introduction to diffeologies we refer to  \cite{diffeo}.

In the following definition, by a \emph{parameterization} of a set $X$ we mean any map from any open $U\subset\R^n$ to $X$.

\begin{definition}
	A diffeological space is a non-empty set $X$ together with a set $\D$ of parameterizations of $X$, such that:
	\begin{itemize}
		\item For all $x\in X$ and all open set $U\subset\R^n$, the constant parameterization $\mathbf{x}:U\to X,\, r\mapsto x$ is in $\D$.
		\item If $P:U\rightarrow X$ is a parametrization and for any $r\in U$ there exists open neighborhood $V$ of r such that $P|_V\in \D$, then $P$ is in $\D$.
		\item For any smooth parametrization $P:U\rightarrow X$ in $\D$, any open subset $V\subset \mathbb{R}^k$ for some $k\in\mathbb{N}$, and any smooth map $F\in C^{\infty}(V,U)$, $P\circ F$ is in $\D$.
	\end{itemize}
	We call the elements of $\D$ the \emph{plots} of the diffeological space.
\end{definition}
A map $f:X\rightarrow X'$ of diffeological spaces is called  \emph{smooth} if for each plot $P$ of $X$, $f\circ P$ is a plot of $X'$. A \emph{diffeological groupoid} is a groupoid internal to the category of diffeological spaces.

\begin{example}
	Given a diffeological space $(X,\D)$ and a surjective map $q:X\rightarrow \tilde{X}$ one obtains a quotient diffeology $\D^q$ on $\tilde{X}$ as follows: $P: U\rightarrow\tilde{X}$ belongs to $\D^q$ if and only if for any $r\in U$ there exists an open neighborhood $V$ of $r$ such that $P|_V=q\circ Q$ for some $Q\in \D$.
\end{example}
One can check by definition that the quotient diffeology satisfies the universal property, i.e., for any diffeological space $Z$ and any map $f:\tilde{X}\rightarrow Z$, $f$ is smooth if and only if $f\circ q$ is smooth.
\begin{example}
	Given a diffeological space $(X,\D)$ and a subset $Y\subset X$ with inclusion $j:Y\rightarrow X$, one obtains a subset diffeology $\D_Y$ (often denoted as $j^*(\D)$). on $Y$ as follows: $P\in \D_Y$ if and only if $\mathrm{img}(P)\subset Y$ and $j\circ P\in \D$.
\end{example}

Next we recall that for any diffeological space, there is a natural topology associated to it.
    \begin{definition}
	For a diffeological space $(X,\D)$, its \emph{$\D$-topology}  $\mathcal{T}_{\D}$ is the finest topology making all the plots continuous, i.e.:
	$$\mathcal{T}_{\D}:=\{O\subset X:P^{-1}({O}) \text{ is open}, \forall P\in\D\}.$$
\end{definition}
It is immediate from the Definiton that smooth maps between diffeological spaces are continuous with respect to their $\D$-topology. In the sequel, we will work with this topology by default.

\subsubsection{Abelianization}

\begin{proposition}
	Given a diffeological groupoid $\G$ the quotient $\G/(\G_M,\G_M)$ is the abelianization of $\G$. 
\end{proposition}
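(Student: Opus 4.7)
The plan is to define $(\G,\G) := \bigcup_{x \in M}(\G_x,\G_x) \subset \G$ as the fiberwise commutator subgroupoid, endow the set-theoretic quotient $\G\ab := \G/(\G,\G)$ with the quotient diffeology induced by the projection $q : \G \to \G\ab$, and verify that it is an abelian diffeological groupoid satisfying the universal property. The argument parallels the topological case but stays within the diffeological category from start to finish, which is possible because quotient diffeologies always exist whereas quotient manifold structures generally do not.

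First I would verify that $(\G,\G)$ is a normal subgroupoid: it lies in the isotropy bundle, is fiberwise closed under the group operations, and $hgh^{-1}$ is a product of commutators in $\G_{s(h)}$ whenever $g$ is one in $\G_{t(g)}$. This is the direct analogue of Lemma \ref{lem:clo:nor} without any need for closures. Next, the groupoid structure maps descend to $\G\ab$: source, target, and unit descend tautologically, and inversion descends because $(\G,\G)$ is closed under inverses. For the multiplication, the composition $q \circ m : \G \times_M \G \to \G\ab$ is smooth and constant along equivalence classes of $q \times q$, so it factors through a set-theoretic map $\bar m : \G\ab \times_M \G\ab \to \G\ab$; smoothness of $\bar m$ follows from the universal property of the quotient diffeology together with the identification of plots of $\G\ab \times_M \G\ab$ with local lifts to $\G \times_M \G$, which is exactly where the \emph{locally subductive} hypothesis mentioned in the introduction enters. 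Abelianness is automatic because the isotropy of $\G\ab$ at $x$ is $\G_x/(\G_x,\G_x)$.

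For the universal property, let $\HH$ be an abelian diffeological groupoid and $\psi : \G|_U \to \HH$ a smooth morphism. Each commutator in $\G_x$ maps under $\psi$ into an isotropy group of $\HH$, which is abelian, hence to a unit; so $\psi$ is constant on the $q$-fibers and descends set-theoretically to a unique map $\tilde\psi : \G\ab|_U \to \HH$. Smoothness of $\tilde\psi$ is immediate from the universal property of the quotient diffeology, and the morphism property together with uniqueness follow from surjectivity of $q$. The main obstacle in the whole argument is the smoothness of $\bar m$, since the quotient diffeology on a fiber product need not coincide with the fiber product of quotient diffeologies; this is precisely the technical role of the locally subductive hypothesis, which furnishes local lifts of plots of $\G\ab \times_M \G\ab$ along $q \times q$ and reduces smoothness of $\bar m$ to that of $q \circ m$. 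Verifying that these local lifts also produce a locally subductive structure on $\G\ab$ itself then completes the parenthetical refinement appearing in the statement of the main theorem.
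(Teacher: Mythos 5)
Your overall strategy --- quotient diffeology, descent of the structure maps, and the universal property via $(\G|_U,\G|_U)\subset\ker(\psi)$ --- is essentially the paper's, and the universal-property argument is fine. But there is one genuine error: you make the smoothness of the induced multiplication $\bar m$ depend on the \emph{locally subductive} hypothesis, which is not part of this proposition. The statement is asserted for an arbitrary diffeological groupoid, and the paper proves it as such; local subductiveness enters only in the separate, subsequent claim that the abelianization of a locally subductive groupoid is again locally subductive. As written, your argument only establishes the weaker statement.

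The concern you raise (that the quotient diffeology on a fibre product need not agree with the fibre product of quotient diffeologies, so plots of $\G\ab\times_M\G\ab$ might not lift along $q\times q$) is a legitimate general worry, but it dissolves here precisely because $(\G,\G)$ is contained in the isotropy: the quotient map $q$ is the identity on objects and satisfies $\tilde{s}\circ q=s$ and $\tilde{t}\circ q=t$ on the nose. Hence if $(P_1,P_2)$ is a plot of $\G\ab\times_M\G\ab$ and, locally, each $P_i$ is either constant (lift it to a constant plot) or of the form $q\circ Q_i$ for a plot $Q_i$ of $\G$, then
\[ s\circ Q_1=\tilde{s}\circ P_1=\tilde{t}\circ P_2=t\circ Q_2, \]
so $(Q_1,Q_2)$ is already a plot of $\G\times_M\G$ and $\tilde{m}\circ(P_1,P_2)=q\circ m\circ(Q_1,Q_2)$ is a plot of $\G\ab$. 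In other words, $q\times q$ restricted to the fibre product is automatically a subduction in this situation, with no hypothesis on $\G$. Replacing your appeal to local subductiveness by this observation repairs the proof and recovers the statement in full generality.
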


\begin{proof}
	We know that $\tilde{\G}:=\G/(\G_M,\G_M)$ is a groupoid, so we only need to show that all its structure maps are smooth. The identity map $\tilde{u}=q\circ u$ is clearly smooth. The rest follows from the fact that the structure maps of $\G$ and $\tilde{\G}$ fit into commutative diagrams
$$\begin{tikzcd}		\G_1&\G_1\\
		\tilde{\G_1}&\tilde{\G_1}
		\arrow["i",from=1-1,to=1-2]
		\arrow["q",from=1-1,to=2-1]
		\arrow["q",from=1-2,to=2-2]
		\arrow["\tilde{i}",from=2-1,to=2-2]
	\end{tikzcd}\qquad
	\begin{tikzcd}
		\G_2&\G_1\\
		\tilde{\G_2}&\tilde{\G_1}
		\arrow["m",from=1-1,to=1-2]
		\arrow["q\times q",from=1-1,to=2-1]
		\arrow["q",from=1-2,to=2-2]
		\arrow["\tilde{m}",from=2-1,to=2-2]
	\end{tikzcd}$$
	$$\begin{tikzcd}
		\G\\
		\tilde{\G_1}&\G_0
		\arrow["s",from=1-1,to=2-2]
		\arrow["q",from=1-1,to=2-1]
		\arrow["\tilde{s}",from=2-1,to=2-2]
	\end{tikzcd}\qquad \begin{tikzcd}
		\G\\
		\tilde{\G}&\G_0
		\arrow["t",from=1-1,to=2-2]
		\arrow["q",from=1-1,to=2-1]
		\arrow["\tilde{t}",from=2-1,to=2-2]
	\end{tikzcd}$$

To show that $\tilde{\G}$ satisfies the universal property, let $U\subset M$ be an open set. First note that $\G|_U$ with the subset diffeology carries a natural diffeological groupoid structure. Given any abelian diffeological groupoid $\HH$ and any morphism $\psi:\G|_U\to \HH$, since $(\G|_U,\G|_U)\subset\ker(\psi)$, there is a unique morphism $\tilde{\psi}:\tilde{\G}|_U\rightarrow \HH$ such that $\psi=\tilde{\psi}\circ q|_U$. It is easy to check that the subset diffeology $\D_{\tilde{\G}|_U}$ on $\tilde{\G}|_U$ coincides with the quotient diffeology on $\G|_U/(\G|_U,\G|_U)$. Thus by universal property of quotients, $\tilde{\psi}$ is smooth since $\psi=\tilde{\psi}\circ q|_U$ is smooth.
\end{proof}

We consider now the following subcategory. It plays, e.g., an important role in the integration problem for Lie algebroids -- see \cite{villatoro2023integrability}.

\begin{definition}
	A surjective smooth map $f:(X,\D)\rightarrow (X',\D')$ is a \emph{subduction} if the following property holds: a plot $P:U\rightarrow X'$ belongs to $\D'$ if and only if for any $r\in U$ there exists an open neighborhood $V$ of $r$ such that $P|_V=f\circ Q$ for some $Q\in \D$.
\end{definition}
The source and target maps of a diffeological groupoid are automatically subductions.

\begin{definition}
	A smooth map $f:X\rightarrow X'$ is \emph{locally subductive} at $x\in X$ if for any plot $P$ of $X'$ such that $P(0)=f(x)$, there exist an open neighborhood $V$ of 0 and a plot $Q:V\rightarrow X$ of $X$ such that $Q(0)=x$ and $f\circ Q=P|_V$. We call $f$ a \emph{local subduction} if it is locally subductive at all $x\in X$.
\end{definition}

\begin{example}
	Let $f:\R\rightarrow\R$ be such that $f(0)=1$ and $f$ vanishes when $|x|>1$. Then $g:\R^2\rightarrow\R=f(x)\cdot y$ is a subduction but not a local subduction.
\end{example}
A diffeological groupoid is called \emph{locally subductive} if the source and target maps are local subductions.
\begin{proposition}
	A locally subductive diffeological groupoid $\G$ always has abelianization $\G\ab=\G/(\G_M,\G_M)$, which is also a locally subductive diffeological groupoid.
\end{proposition}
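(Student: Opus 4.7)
The plan is to separate the two claims: first, that $\G/(\G,\G)$ is an abelianization in the sense of diffeological groupoids, and second, that it is locally subductive. The first claim is essentially free from the previous proposition, which already established that $\tilde{\G}:=\G/(\G,\G)$ with the quotient diffeology is a diffeological groupoid satisfying the relevant universal property against \emph{all} abelian diffeological groupoids. Since locally subductive diffeological groupoids form a full subcategory, the same universal property holds against abelian locally subductive groupoids $\HH$ — provided we know that $\tilde{\G}$ itself lies in this subcategory.

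So the substantive task is to verify that the source and target maps $\tilde{s},\tilde{t}:\tilde{\G}\to M$ are local subductions. I would argue only for $\tilde{s}$; the case of $\tilde{t}$ is identical. Fix $\tilde{g}\in\tilde{\G}$ and a plot $P:U\to M$ with $0\in U$ and $P(0)=\tilde{s}(\tilde{g})$. Choose any $g\in q^{-1}(\tilde{g})$, so that $s(g)=\tilde{s}(\tilde{g})=P(0)$. Because $s:\G\to M$ is a local subduction by hypothesis, there exist an open neighborhood $V\subset U$ of $0$ and a plot $Q:V\to \G$ of $\G$ with $Q(0)=g$ and $s\circ Q=P|_V$.

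Now set $\tilde{Q}:=q\circ Q:V\to\tilde{\G}$. Since $q$ is smooth (it is a plot of the quotient diffeology composed trivially, or equivalently $q$ is automatically smooth by construction of the quotient diffeology), $\tilde{Q}$ is a plot of $\tilde{\G}$. Moreover $\tilde{Q}(0)=q(g)=\tilde{g}$, and using the commutative diagram $\tilde{s}\circ q=s$ we get
\[
\tilde{s}\circ\tilde{Q}=\tilde{s}\circ q\circ Q=s\circ Q=P|_V.
\]
This exhibits the required local lift, so $\tilde{s}$ is locally subductive at $\tilde{g}$; since $\tilde{g}$ was arbitrary, $\tilde{s}$ is a local subduction, and symmetrically for $\tilde{t}$. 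Hence $\tilde{\G}$ is locally subductive, and together with the preceding proposition this shows $\tilde{\G}$ is the abelianization of $\G$ inside the category of locally subductive diffeological groupoids. There is no real obstacle here beyond keeping track of the definitions — the flexibility of diffeologies makes the lifting construction completely formal, which is the whole point of passing to this category.
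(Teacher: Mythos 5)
Your proof is correct and follows essentially the same route as the paper: lift the plot $P$ through the local subduction $s$ to a plot $Q$ of $\G$ based at a chosen preimage $g$ of $\tilde{g}$, then push forward by $q$ and use $\tilde{s}\circ q=s$. The only difference is that you make explicit the (correct) appeal to the preceding proposition for the universal property, which the paper leaves implicit.
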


\begin{proof}
	Let $[g]\in \G_1\ab$ and $P$ be a plot of $\G_0=\tilde{\G_0}$ such that $P(0)=s\ab([g])$. Then since $s$  is locally subductive, we have open neighborhood $V$ of 0 and $Q$ in plot of $\G_1$ such that $Q(0)=g$ and $s\circ Q=P|_V$. Now take $\tilde{Q}=q\circ Q$, which will be a plot of $\G_1\ab$. We have $\tilde{Q}(0)=[g]$ and $\tilde{s}\circ \tilde{Q}=s\circ Q=P|_V$. Thus $\tilde{s}$ is a local subduction.
\end{proof}
\bibliographystyle{alpha}
\nocite{*}
\bibliography{abelianization.bib}
\end{document}